\documentclass[a4paper,12pt]{article}
\hyphenation{op-tical net-works semi-conduc-tor}
\usepackage[utf8]{inputenc}
\usepackage[T1]{fontenc}
\usepackage{amsfonts,amsmath,amssymb,amsthm}
\theoremstyle{definition}

\theoremstyle{plain}
\newtheorem{corollary}{Corollary}
\newtheorem{lemma}{Lemma}
\newtheorem{proposition}{Proposition}
\newtheorem{theorem}{Theorem}
\usepackage{graphicx}
\usepackage{caption}
\usepackage{pgf,tikz,circuitikz,subfig}
\usetikzlibrary{arrows,shapes,positioning}
\usepackage{booktabs,multirow,tabularx}
\usepackage{hyperref}
\hypersetup{colorlinks,%
	citecolor={red}, 
	urlcolor={blue},
	linkcolor={blue},
	breaklinks={true},
	pagebackref={true},
	hyperindex={true},
}
\usepackage{url}
\usepackage{cite}
\usepackage[left=2cm,right=2cm,top=2cm,bottom=2cm]{geometry}
\usepackage[ruled]{algorithm}
\usepackage{algorithmic}
\usepackage{newtxtext}
\usepackage{courier}
\usepackage{enumitem}
\newlist{abbrv}{itemize}{1}
\setlist[abbrv,1]{label=,labelwidth=0.9in,align=parleft,noitemsep,leftmargin=!}
\newcommand{\R}{\mathbb{R}}


\newcommand{\rv}[1]{\boldsymbol{#1}}

\newcommand{\ub}[1]{\overline{#1}}
\newcommand{\lb}[1]{\underline{#1}}
\newcommand{\geo}[1]{\mathtt{#1}}
\DeclareMathOperator{\subj}{s.t.}

\title{The equilateral small octagon of maximal width}
\author{Christian Bingane\thanks{D\'{e}partement de math\'{e}matiques et de g\'{e}nie industriel, Polytechnique Montr\'{e}al, Montreal, Quebec, Canada, H3C~3A7. Emails: \url{christian.bingane@polymtl.ca}, \url{charles.audet@polymtl.ca}} \and Charles Audet\footnotemark[1]}
\begin{document}
\maketitle
\begin{abstract}
A small polygon is a polygon of unit diameter. The maximal width of an equilateral small polygon with $n=2^s$ vertices is not known when $s \ge 3$. This paper solves the first open case and finds the optimal equilateral small octagon. Its width is approximately $3.24\%$ larger than the width of the regular octagon: $\cos(\pi/8)$. In addition, the paper proposes a family of equilateral small $n$-gons, for $n=2^s$ with $s\ge 4$, whose widths are within $O(1/n^4)$ of the maximal width.
\end{abstract}
\paragraph{Keywords} Convex geometry, equilateral polygons, isodiametric problem, maximal width


\section{Introduction}
The {\em diameter} of a polygon is the largest Euclidean distance between pairs of its vertices. A polygon is said to be {\em small} if its diameter equals one. The {\em width}­ of a polygon for a given direction is the distance between two parallel lines perpendicular to this direction and supporting the polygon from below and above. The width of a polygon is the minimum width over all directions. The diameter graph of a small polygon is defined as the graph with the vertices of the polygon, and an edge between two vertices exists only if the distance between these vertices equals one. Figures~\ref{figure:4gon}, \ref{figure:6gon} and \ref{figure:8gon} represent diameter graphs of small quadrilaterals, hexagons and octagons, respectively. The solid lines are the edges of the graphs, and the dashed lines simply delimit the polygons.

For an integer $n \ge 3$, the maximal width of an equilateral small $n$-gon is unknown when $n = 2^s$ and $s \ge 3$~\cite{bezdek2000}. The present paper solves the first open case and gives the optimal equilateral small $8$-gon. Thus, the main result is the following:

\begin{theorem}\label{thm:F8}
	If $w_8^*$ denote the maximal width among all equilateral small $8$-gons then
	\[
	w_8^* = \frac{t_0^2+t_0}{2 \sqrt{t_0^3+t_0^2-2t_0 +1}} + \varepsilon =  0.9537763006\ldots + \varepsilon,
	\]
	where $t_0 = 0.7682191676\ldots$ is the unique positive root of the polynomial equation $t^5-6t^3+3t^2+10t-7=0$ and $0 \le \varepsilon \le 10^{-6}$. The value $w_8^*$ is only attained by the $8$-gon $\geo{F}_8$ illustrated in Figure~\ref{figure:8gon:F8}.
\end{theorem}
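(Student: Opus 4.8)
The plan is to recast the maximal‑width problem as a finite‑dimensional nonlinear program, extract its optimality conditions, and then close the gap between a constructive lower bound and a relaxation‑based upper bound.

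I would begin with two reductions. First, as is standard for problems of this kind, an equilateral small octagon of maximal width may be assumed convex. Second, for a convex polygon the minimum width is attained in a direction perpendicular to one of its edges, because the width function $\theta\mapsto w(\theta)$ is piecewise sinusoidal with breakpoints exactly at the edge‑normal directions, and a sinusoid that stays positive on a closed interval attains its minimum there at an endpoint. Hence $w=\min_{0\le i\le 7}h_i$, where $h_i$ is the distance from the line supporting the $i$‑th edge to the vertex farthest from it. Next I would pin down the combinatorial type of the diameter graph: a small octagon admits only finitely many such types, and for each type other than the one realized by $\geo{F}_8$ I would exhibit a length‑ and diameter‑preserving perturbation that strictly increases the width, or else bound directly the best width of that type below $0.9537763006\ldots$. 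This leaves a single family to analyze.

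Within that family, the equilateral and unit‑diameter equalities cut the shape space down to a low‑dimensional manifold, which I would coordinatize by a few half‑angles $\alpha_j$ of the diameter chords. Writing the heights $h_i$ and the side‑length equations in these coordinates turns the problem into
\[
\max\ w \quad\text{s.t.}\quad w\le h_i(\alpha)\ \ (0\le i\le 7),\qquad \text{(equilateral)},\qquad \text{(diameter }=1\text{)}.
\]
At an optimal configuration, enough of the constraints $w=h_i$ must be active to determine the polygon up to congruence; writing the Karush--Kuhn--Tucker stationarity equations and introducing the substitution $t=\tan(\text{half-angle})$ reduces the system, after eliminating the remaining unknowns by resultants or a Gr\"obner basis, to a single univariate condition whose relevant factor is $t^5-6t^3+3t^2+10t-7=0$. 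I would then verify that this factor has exactly one root $t_0$ in the admissible interval and that the corresponding width equals $(t_0^2+t_0)/\bigl(2\sqrt{t_0^3+t_0^2-2t_0+1}\bigr)=0.9537763006\ldots$.

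It remains to certify optimality. For the lower bound I would write the vertices of $\geo{F}_8$ explicitly in terms of $t_0$ and check by direct computation that it is equilateral, has diameter $1$, and has width equal to the value above, giving $w_8^*\ge 0.9537763006\ldots$. For the upper bound I would apply a rigorous relaxation of the nonconvex program --- an explicit enclosure via interval arithmetic, or a Lasserre‑type bound on the associated polynomial optimization --- to obtain $w_8^*\le 0.9537763006\ldots+10^{-6}$; combining the two gives $w_8^*=0.9537763006\ldots+\varepsilon$ with $0\le\varepsilon\le10^{-6}$, and uniqueness follows since the optimality system has, up to congruence, only the solution giving $\geo{F}_8$. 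I expect the main obstacle to be the diameter‑graph case analysis together with the proof that the critical point in the surviving family is the global maximizer rather than a saddle point or a boundary configuration; by comparison, the elimination producing the quintic and the a posteriori estimate of $\varepsilon$ are laborious but conceptually routine.
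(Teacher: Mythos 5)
Your overall architecture --- construct the candidate analytically, restrict the combinatorics, then certify global optimality with a rigorous solver --- is the same as the paper's, and your lower-bound and certification steps essentially match what is done there (the paper runs the branch-and-bound solver Couenne on the nonconvex program once per admissible configuration, then reruns it with the bound $w \ge 0.953777$ to obtain $\varepsilon \le 10^{-6}$, and a third time with a ball-exclusion constraint to get $\varepsilon^*$-uniqueness). However, there are two genuine gaps. First, you classify by the \emph{diameter graph}, which is the wrong combinatorial object for the width problem. The constraint $w \le h_i$ involves, for each side, the vertex \emph{farthest} from that side, and that vertex need not be at unit distance: in the optimal octagon three of the eight relevant diagonals have length $d_0 \approx 0.9576 < 1$. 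The paper therefore works with the \emph{height graph} (edges $\geo{v}_{i-1}-\geo{v}_{k_i}$ and $\geo{v}_{i}-\geo{v}_{k_i}$, whose lengths are only bounded below by the width), of which the diameter graph is a proper subgraph. For the same reason, the ``low-dimensional manifold cut out by the unit-diameter equalities,'' which you propose to coordinatize by half-angles of diameter chords, does not exist in the form you describe: for $n=2^s$ the diameter graph is too sparse to rigidify the polygon, which is exactly why these are the open cases in Theorem~\ref{thm:width}. Relatedly, the quintic is not produced by KKT elimination but falls directly out of the closed symmetric configuration, namely the system \eqref{eq:F8:condition:dab}, \eqref{eq:F8:condition:ab}, \eqref{eq:F8:x6} in $(\alpha,\beta,d)$ with $t = 2\sin\alpha$.

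Second, the combinatorial classification is the heart of the proof, and you only assert it (``exhibit a perturbation that strictly increases the width, or else bound directly''), while conceding it is the main obstacle. The paper closes it with elementary metric bounds rather than perturbations: the maximal-perimeter octagon $\geo{H}_8$ gives $c < 0.3870$; the width--perimeter inequality of Theorem~\ref{thm:width:perimeter} combined with the constructive lower bound $w_8^* \ge W(\geo{F}_8) > 0.9537$ gives $c > 0.3794$; since $w_8^* > 2\ub{c}$, the farthest vertex from each side must be its third or fourth neighbour (Lemma~\ref{lemma:ki}); and a Pythagorean estimate shows two height-graph edges cannot be disjoint (Lemma~\ref{lemma:ki:4gon}). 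This leaves exactly two height graphs (Corollary~\ref{thm:height:optimal}), one of which Couenne certifies infeasible above the threshold $\lb{w}$. Without an argument of this kind your plan does not get off the ground. Finally, note that exact uniqueness does not ``follow since the optimality system has only one solution'': that claim presupposes the unestablished classification, and even the paper proves only $\varepsilon^*$-uniqueness with $\varepsilon^* = 1.5\times 10^{-4}$.
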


In addition, for $n=2^s$ with integer $s\ge 4$, a tight lower bound on the maximal width is proposed by constructing a family of equilateral small $n$-gons whose widths are greater than that of the regular small $n$-gons.

\begin{theorem}\label{thm:Gn}
	Suppose $n=2^s$ with integer $s\ge 4$. Let $\ub{W}_n := \cos \frac{\pi}{2n}$ denote an upper bound on the width $W(\geo{P}_n)$ of a small $n$-gon $\geo{P}_n$~\cite{bezdek2000}. Let $\geo{R}_n$ denote the regular small $n$-gon. Then there exists an equilateral small $n$-gon $\geo{G}_n$ such that
	\[
	\ub{W}_n - W(\geo{G}_n) = \frac{2\pi^4}{3n^4} + O\left(\frac{1}{n^6}\right)
	\]
	and
	\[
	W(\geo{G}_n) - W(\geo{R}_n) = \frac{3\pi^2}{8n^2} + O\left(\frac{1}{n^4}\right).
	\]
\end{theorem}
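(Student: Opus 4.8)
The plan is constructive: exhibit the polygon $\geo{G}_n$ explicitly, determine its width in essentially closed form, and then expand in powers of $1/n$. I would build $\geo{G}_n$ as one member of a one-parameter family of equilateral small $n$-gons obtained by deforming a simple base configuration, most naturally the regular small $n$-gon $\geo{R}_n$. The cleanest way to keep all side lengths equal is to prescribe the polygon through its turning angles, letting these depend on a single shape parameter $a$ (an angle) with $a = 0$ returning a scaled copy of $\geo{R}_n$; then equilaterality is automatic and convexity holds because the turning angles stay positive for $a$ in the admissible range. The deformation should be engineered so that the diameter graph of $\geo{G}_n$ carries strictly more unit edges than the perfect matching of $\geo{R}_n$ --- informally, $\geo{G}_n$ is a polygonal near-approximation of a Reuleaux-type constant-width region, which is exactly what pushes the width toward the upper bound $\cos\frac{\pi}{2n}$. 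Finally I would fix $a = a_n$: either the maximizer of $W(\geo{G}_n(a))$ over the family (an algebraic number cut out by a polynomial in $a$ with coefficients built from $\cos\frac{\pi}{n}$ and $\sin\frac{\pi}{n}$, the analogue of $t_0$ in Theorem~\ref{thm:F8}), or a clean explicit surrogate of the form $a_n = c/n + O(1/n^2)$ that already realises the stated asymptotics.

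Next I would check that $\geo{G}_n$ really is small, i.e. has diameter exactly $1$. Exploiting the symmetry of $\geo{G}_n$, the set of vertex pairs reduces to finitely many orbit representatives; for each I would write the squared distance by the law of cosines as a function of $a$ and $\pi/n$, and verify that the maximum over all pairs equals $1$, with equality precisely on the intended diameter pairs. This same computation fixes the admissible range $0 \le a \le a_{\max}$.

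Then comes the width. Writing $w(\phi)$ for the length of the orthogonal projection of the vertex set of $\geo{G}_n$ onto a line of direction $\phi$, convexity makes $\phi \mapsto w(\phi)$ piecewise sinusoidal with the symmetry of $\geo{G}_n$, so its global minimum is attained among a short explicit list of directions --- those perpendicular to an edge and those through a vertex. Each candidate $w(\phi)$ is a vertex-to-edge or vertex-to-vertex distance, hence a closed expression in $a$ and $\pi/n$; comparing these and checking the sign of $w'$ on every interval between consecutive breakpoints identifies the minimizing direction and gives $W(\geo{G}_n(a))$ in closed form. Substituting $a = a_n$ and expanding, together with $\cos\frac{\pi}{2n} = 1 - \frac{\pi^2}{8n^2} + \frac{\pi^4}{384n^4} + \cdots$ and $W(\geo{R}_n) = \cos\frac{\pi}{n} = 1 - \frac{\pi^2}{2n^2} + \frac{\pi^4}{24n^4} + \cdots$, yields both displayed estimates; they are mutually consistent since $\cos\frac{\pi}{2n} - \cos\frac{\pi}{n} = \frac{3\pi^2}{8n^2} + O(1/n^4)$. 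The hypothesis $s \ge 4$ enters only as ``$n$ large enough'' so that the expansions and the diameter check behave uniformly.

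The main obstacle is this last width computation. Because $\geo{G}_n$ is built to be nearly of constant width, $w(\phi)$ is almost flat and many directions give nearly equal values, so one must carefully determine which direction attains the minimum --- and the minimizer could a priori depend on the size regime of $n$ --- rather than merely upper-bounding $W(\geo{G}_n)$ via one convenient direction. Crucially, lower-bounding $W(\geo{G}_n)$ to the required precision means controlling $w(\phi)$ over \emph{all} directions, not just exhibiting one good one; even the weaker claim $W(\geo{G}_n) > W(\geo{R}_n)$ already needs this global control. A secondary nuisance is keeping the diameter bookkeeping uniform in $n$, since the number of vertex-pair orbits to inspect grows with $s$.
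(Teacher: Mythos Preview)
Your high-level strategy---build an explicit near-Reuleaux equilateral $n$-gon, extract its width in closed form, expand---matches the paper, but the proposal never commits to a construction, and that is where all the content lies.  The paper does not deform $\geo{R}_n$ through an unspecified one-parameter family of turning angles.  It instead \emph{prescribes the height graph of $\geo{G}_n$ in advance}: a cycle of length $3n/4-1$ through the vertices $\geo{v}_0,\geo{v}_{n/2-1},\ldots,\geo{v}_{n/4},\geo{v}_{3n/4},\ldots,\geo{v}_{n/2+1},\geo{v}_0$ whose edges alternate between length $1$ and a second length $d<1$, together with $n/4+1$ pendant edges, the whole picture symmetric about $\geo{v}_0\geo{v}_{n/2}$.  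This combinatorial skeleton reduces the polygon to three scalars $(\alpha,\beta,d)$ (two angles and the short diagonal), determined by three equations: the equilateral condition $1+d^2-2d\cos\alpha=2-2\cos\beta$, the cycle-closure relation $\tfrac{n}{8}(3\alpha+\beta)=\tfrac{\pi}{2}$, and the horizontal diagonal length constraint.  An asymptotic solution $\alpha,\beta=\pi/n+O(n^{-5})$, $d=1-\tfrac{4\pi^4}{3n^4}+O(n^{-6})$ then drops out.

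The payoff of fixing the height graph first is precisely what you flag as the main obstacle.  Because the height graph already records, for every side $\geo{v}_{i-1}\geo{v}_i$, which vertex is furthest from it, every height $h_i$ is read off directly from $(\alpha,\beta,d)$; by the imposed symmetry and the alternating $1/d$ pattern they all equal $d\sin\alpha/(2\sin(\beta/2))$, so $W(\geo{G}_n)$ is that single expression and no minimization over directions is needed.  Your turning-angle parametrization makes equilaterality automatic but pushes the work into identifying the minimizing direction afterward, which---as you correctly anticipate---is delicate for a near-constant-width polygon.  The paper's parametrization makes the opposite trade: the width is trivial, and equilaterality becomes one of the defining equations.

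So the gap is not that your outline is wrong, but that it stops short of the one idea that makes the proof go through: the specific height-graph ansatz.  Without it, ``engineer the deformation so the diameter graph carries more unit edges'' is a target, not a construction, and the width computation you worry about remains genuinely open.
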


The remainder of this paper is organized as follows. Section~\ref{sec:ngon} recalls principal results on the maximal width of small polygons. Section~\ref{sec:F8} studies the equilateral small octagon and gives the proof of Theorem~\ref{thm:F8}. Theorem~\ref{thm:Gn} is proved in Section~\ref{sec:Gn} and Section~\ref{sec:conclusion} concludes the paper.

\begin{figure}[H]
	\centering
	\subfloat[$(\geo{R}_4,0.707107)$]{
		\begin{tikzpicture}[scale=4]
			\draw[dashed] (0,0) -- (0.5000,0.5000) -- (0,1) -- (-0.5000,0.5000) -- cycle;
			\draw (0,0) -- (0,1);
			\draw (0.5000,0.5000) -- (-0.5000,0.5000);
		\end{tikzpicture}
	}
	\subfloat[$(\geo{Q}_4,0.866025)$]{
		\begin{tikzpicture}[scale=4]
			\draw[dashed] (0.5000,0.8660) -- (0,1) -- (-0.5000,0.8660);
			\draw (0,1) -- (0,0) -- (0.5000,0.8660) -- (-0.5000,0.8660) -- (0,0);
		\end{tikzpicture}
		\label{figure:4gon:Q4}
	}
	\caption{Two small $4$-gons $(\geo{P}_4,W(\geo{P}_4))$: (a) Regular $4$-gon; (b) An optimal non-equilateral $4$-gon~\cite{tamvakis1987}}
	\label{figure:4gon}
\end{figure}
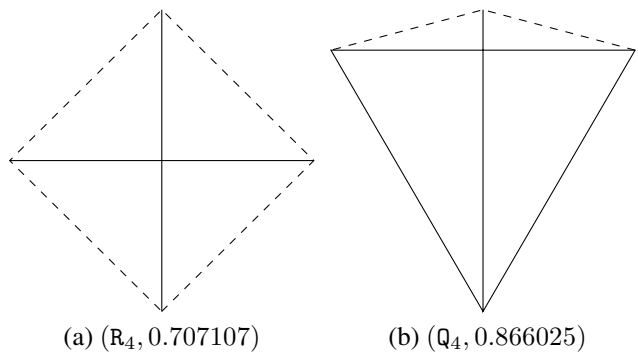

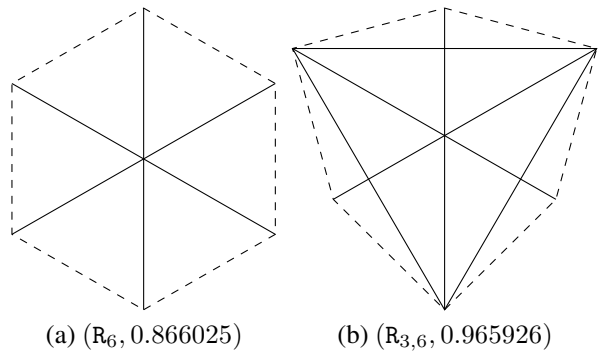
\begin{figure}[H]
	\centering
	\subfloat[$(\geo{R}_6,0.866025)$]{
		\begin{tikzpicture}[scale=4]
		\draw[dashed] (0,0) -- (0.4330,0.2500) -- (0.4330,0.7500) -- (0,1) -- (-0.4330,0.7500) -- (-0.4330,0.2500) -- cycle;
		\draw (0,0) -- (0,1);
		\draw (0.4330,0.2500) -- (-0.4330,0.7500);
		\draw (0.4330,0.7500) -- (-0.4330,0.2500);
		\end{tikzpicture}
	}
	\subfloat[$(\geo{R}_{3,6},0.965926)$]{
		\begin{tikzpicture}[scale=4]
		\draw[dashed] (0,0) -- (0.3660,0.3660) -- (0.5000,0.8660) -- (0,1) -- (-0.5000,0.8660) -- (-0.3660,0.3660) -- cycle;
		\draw (0,0) -- (0.5000,0.8660) -- (-0.5000,0.8660) -- cycle;
		\draw (0,0) -- (0,1);
		\draw (0.3660,0.3660) -- (-0.5000,0.8660);
		\draw (0.5000,0.8660) -- (-0.3660,0.3660);
		\end{tikzpicture}
	\label{figure:6gon:R36}
	}
	\caption{Two equilateral small $6$-gons $(\geo{P}_6,W(\geo{P}_6))$: (a) Regular $6$-gon; (b) Reinhardt $6$-gon~\cite{reinhardt1922}}
	\label{figure:6gon}
\end{figure}

\begin{figure}[H]
	\centering
	\subfloat[$(\geo{R}_8,0.923880)$]{
		\begin{tikzpicture}[scale=4]
		\draw[dashed] (0,0) -- (0.3536,0.1464) -- (0.5000,0.5000) -- (0.3536,0.8536) -- (0,1) -- (-0.3536,0.8536) -- (-0.5000,0.5000) -- (-0.3536,0.1464) -- cycle;
		\draw (0,0) -- (0,1);
		\draw (0.3536,0.1464) -- (-0.3536,0.8536);
		\draw (0.5000,0.5000) -- (-0.5000,0.5000);
		\draw (0.3536,0.8536) -- (-0.3536,0.1464);
		\end{tikzpicture}
	}
\subfloat[$(\geo{H}_8,0.950394)$]{
	\begin{tikzpicture}[scale=4]
		\draw[dashed] (0,1) -- (0.3796,0.9251) -- (0.5000,0.5574) -- (0.3228,0.2134) -- (0,0) -- (-0.3228,0.2134) -- (-0.5000,0.5574) -- (-0.3796,0.9251) -- cycle;
		\draw (0,0) -- (0,1);
		\draw (0,0) -- (0.3796,0.9251);\draw (0,0) -- (-0.3796,0.9251);
		\draw (0.3796,0.9251) -- (-0.3228,0.2134);\draw (-0.3796,0.9251) -- (0.3228,0.2134);
		\draw (0.5000,0.5574) -- (-0.5000,0.5574);
	\end{tikzpicture}
		\label{figure:8gon:H8}
}
\subfloat[$(\geo{F}_8,0.953776)$]{
	\begin{tikzpicture}[scale=4]
		\draw[dashed] (0,0) -- (0.3208,0.2140) -- (0.5000,0.5555) -- (0.3841,0.9233) -- (0,0.9576) -- (-0.3841,0.9233) -- (-0.5000,0.5555) -- (-0.3208,0.2140) -- cycle;
		\draw (0,0) -- (0.3841,0.9233) -- (-0.3208,0.2140);\draw (0,0) -- (-0.3841,0.9233) -- (0.3208,0.2140);
		\draw (0.5000,0.5555) -- (-0.5000,0.5555);
	\end{tikzpicture}
	\label{figure:8gon:F8}
}
\subfloat[$(\geo{B}_8,0.977609)$]{
	\begin{tikzpicture}[scale=4]
		\draw[dashed] (0,0) -- (0.2957,0.2043) -- (0.5000,0.5000) -- (0.4114,0.9114) -- (0,1) -- (-0.4114,0.9114) -- (-0.5000,0.5000) -- (-0.2957,0.2043) -- cycle;
		\draw (0,0) -- (0.4114,0.9114) -- (-0.5000,0.5000) -- (0.5000,0.5000) -- (-0.4114,0.9114) -- cycle;
		\draw (0,0) -- (0,1);
		\draw (0.4114,0.9114) -- (-0.2957,0.2043);\draw (-0.4114,0.9114) -- (0.2957,0.2043);
	\end{tikzpicture}
	\label{figure:8gon:B8}
}
\caption{Four small $8$-gons $(\geo{P}_8,W(\geo{P}_8))$: (a) Regular $8$-gon; (b) Equilateral $8$-gon of maximal perimeter~\cite{audet2004}; (c) Equilateral $8$-gon of maximal width; (d) A non-equilateral $8$-gon of maximal width~\cite{audet2013}}
\label{figure:8gon}
\end{figure}
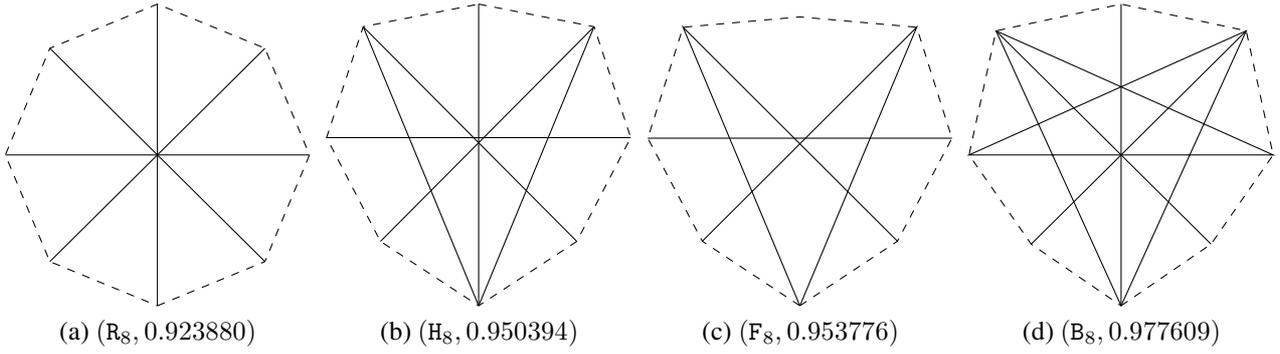


\section{Widths of small polygons}\label{sec:ngon}
Let $W(\geo{P})$ denote the width of a polygon $\geo{P}$. For a given integer $n\ge 3$, the regular small $n$-gon~$\geo{R}_n$ satisfies
\[
W(\geo{R}_n) =
\begin{cases}
	\cos \frac{\pi}{2n} &\text{if $n$ is odd,}\\
	\cos \frac{\pi}{n} &\text{if $n$ is even.}\\
\end{cases}
\]

When $n$ has an odd factor $m$, consider the family of equilateral small $n$-gons constructed as follows:
\begin{enumerate}
	\item Transform the regular small $m$-gon  $\geo{R}_m$ into a Reuleaux $m$-gon by replacing each edge by a circle's arc passing through its end vertices and centered at the opposite vertex;
	\item Add at regular intervals $n/m-1$ vertices within each arc;
	\item Take the convex hull of all vertices.
\end{enumerate}
These $n$-gons are denoted $\geo{R}_{m,n}$ and their widths satisfy $W(\geo{R}_{m,n}) = \cos \frac{\pi}{2n}$. The $6$-gon $\geo{R}_{3,6}$ is illustrated in Figure~\ref{figure:6gon:R36}.

\begin{theorem}[Bezdek and Fodor~\cite{bezdek2000}]\label{thm:width}
	For all $n \ge 3$, let $W_n^*$ denote the maximal width among all small $n$-gons, $w_n^*$ the maximal width among all equilateral ones, and $\ub{W}_n := \cos \frac{\pi}{2n}$.
	\begin{itemize}
		\item When $n$ has an odd factor, $w_n^* = W_n^* = \ub{W}_n$ is achieved by finitely many equilateral $n$-gons~\cite{mossinghoff2011,hare2013,hare2019}, including~$\geo{R}_{m,n}$. The optimal $n$-gon $\geo{R}_{m,n}$ is unique if $m$ is prime and $n/m \le 2$.
		\item When $n=2^s$ with integer $s\ge 2$, $W(\geo{R}_n) <  W_n^* < \ub{W}_n$.
	\end{itemize}
\end{theorem}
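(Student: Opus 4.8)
The plan is to prove, in order, a realisability fact, a universal upper bound together with its equality case, and then deduce all three assertions. First I would verify directly that $\geo{R}_{m,n}$ realises the bound and is equilateral whenever $n$ has an odd factor $m\ge 3$. Scaling so that the long diagonals of $\geo{R}_m$ have length $1$ makes the Reuleaux $m$-gon of Step~1 of the recipe a body of constant width $1$, hence of diameter $1$, and inserting the vertices of Steps~2--3 keeps the diameter equal to $1$. Each of the $m$ congruent arcs subtends angle $\pi/m$ at its centre (the opposite vertex of $\geo{R}_m$), so after adding $n/m-1$ equally spaced points every edge of $\geo{R}_{m,n}$ is a chord subtending angle $\pi/n$ at a polygon vertex and thus lies at distance $\cos\frac{\pi}{2n}$ from it; since the minimal width of a convex polygon is always attained perpendicular to an edge, a short computation gives $W(\geo{R}_{m,n})=\cos\frac{\pi}{2n}=\ub{W}_n$. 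Equal spacing on congruent arcs also makes all edges equal, so $\geo{R}_{m,n}$ is equilateral, whence $w_n^\ast\ge\ub{W}_n$ in this case.

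Next I would prove $W(\geo{P})\le\ub{W}_n$ for every small $n$-gon, with equality only for polygons obtained by the recipe above. By compactness a width-maximiser $\geo{P}^\ast$ exists; write $D^\ast$ for its diameter graph. A sequence of perturbation arguments --- each one exhibiting, when some structural feature is absent, a deformation that preserves the diameter and strictly increases the width --- together with the Hopf--Pannwitz bound of at most $n$ edges on $n$ vertices and the fact that any two diameters of a planar set cross or share an endpoint, forces $D^\ast$ to be a single odd cycle of length $m\ge 3$ with pendant trees attached. A Reinhardt-type rigidity analysis of the $m$ pairwise-crossing diameters along this cycle then shows $\geo{P}^\ast$ is inscribed in a Reuleaux $m$-gon of width $1$ with the $m$ corners among its vertices and the remaining $n-m$ vertices on the arcs; optimising the width over the placement of the arc vertices gives the value $\cos\frac{\pi}{2n}$, attained exactly when they are equally spaced, which forces $m\mid n$ because the arc angle $\pi/m$ must then split into $\pi/n$-steps. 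This yields $W_n^\ast=W(\geo{P}^\ast)\le\ub{W}_n$ in general and, with the first paragraph, $w_n^\ast=W_n^\ast=\ub{W}_n$ whenever $n$ has an odd factor. The equality configurations are rigid up to finitely many admissible combinatorial placements, each of which pins $\geo{P}^\ast$ down to finitely many solutions of an algebraic system, so there are finitely many optimisers; the enumerations in \cite{mossinghoff2011,hare2013,hare2019} identify them and contain $\geo{R}_{m,n}$. When $m$ is prime the odd cycle admits no refinement, so its $m$ vertices must be those of the regular $\geo{R}_m$; and $n/m\le 2$ leaves at most one inserted vertex per arc, which the symmetry forces to the arc midpoint. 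Hence $\geo{P}^\ast=\geo{R}_{m,n}$ is the unique optimiser.

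Finally, for $n=2^s$ with $s\ge 2$: since $n$ has no odd factor $m\ge 3$, the equality case above --- which requires such an $m$ --- cannot occur, so $W_n^\ast<\ub{W}_n$. For the other inequality it is enough to beat the regular polygon once: the diameter graph of $\geo{R}_{2^s}$ is the perfect matching formed by its $n/2$ main diagonals, which is disconnected and so already violates the necessary structure above, while an explicit symmetric deformation --- concretely, any of $\geo{Q}_4$, $\geo{B}_8$, or the polygons $\geo{G}_n$ from Theorem~\ref{thm:Gn} for $s\ge 4$ --- has strictly larger width, giving $W(\geo{R}_n)<W_n^\ast<\ub{W}_n$. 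I expect the real difficulty to lie entirely in the structural analysis of $D^\ast$: both the chain of width-increasing perturbations and the Reinhardt-type passage from ``odd cycle in the diameter graph'' to ``inscribed in a Reuleaux $m$-gon of width exactly $\cos\frac{\pi}{2n}$'' demand careful planar-incidence and trigonometric arguments, and it is exactly the breakdown of this rigidity for $n=2^s$ that creates the strict inequality.
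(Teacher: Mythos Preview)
The paper does not prove Theorem~\ref{thm:width}: it is quoted with attribution to Bezdek and Fodor~\cite{bezdek2000}, with the finiteness and enumeration claims deferred to~\cite{mossinghoff2011,hare2013,hare2019}. There is therefore no in-paper argument to compare your proposal against.

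Read as a standalone sketch, your outline follows the natural architecture of the Bezdek--Fodor result --- realise $\ub{W}_n$ by $\geo{R}_{m,n}$, prove $W(\geo{P})\le\ub{W}_n$ with equality forcing inscription in a Reuleaux odd-gon, and deduce strictness for $n=2^s$ from the absence of an odd factor --- but the load-bearing step is only named, not argued. The sentence ``a sequence of perturbation arguments \ldots forces $D^\ast$ to be a single odd cycle with pendant trees attached'' and the ensuing ``Reinhardt-type rigidity analysis'' are precisely where the mathematics lives; as written they are assertions, and nothing in your text indicates \emph{which} perturbations increase width while preserving diameter, or why the optimal placement of arc vertices yields exactly $\cos\frac{\pi}{2n}$ and forces $m\mid n$. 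Two smaller points: in the first paragraph you show each edge of $\geo{R}_{m,n}$ lies at distance $\cos\frac{\pi}{2n}$ from the arc centre, but to conclude $W(\geo{R}_{m,n})=\cos\frac{\pi}{2n}$ you still need one line checking that this arc centre is indeed the \emph{farthest} vertex from that edge (so that the height is not larger); and in the last paragraph, invoking $\geo{B}_8$ and $\geo{G}_n$ to beat $\geo{R}_n$ is logically sound but anachronistic relative to~\cite{bezdek2000} and, within this paper, makes Theorem~\ref{thm:width} depend on Theorem~\ref{thm:Gn}.
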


When $n = 2^s$, $W_n^*$ is known for $s \le 3$ and for the equilateral cases, $w_n^*$ is known only for $s = 2$. Bezdek and Fodor~\cite{bezdek2000} proved that $W_4^* = \sqrt{3}/2$, and this value is achieved by infinitely many non-equilateral small $4$-gons, including~$\geo{Q}_4$ represented in Figure~\ref{figure:4gon:Q4}. Audet, Hansen, Messine, and Ninin~\cite{audet2013} found that $W_8^* = \frac{1}{4}\sqrt{10+2\sqrt{7}}$, which is also achieved by infinitely many non-equilateral small $8$-gons, including~$\geo{B}_8$ represented in Figure~\ref{figure:8gon:B8}. For the equilateral $4$-gons, it is trivial that $w_4^* = W(\geo{R}_4) = \sqrt{2}/2 < W_4^*$.

For $n = 2^s$ with $s\ge 4$, tight lower bounds on $W_n^*$ may be obtained analytically. Bingane~\cite{bingane2021b,bingane2021d} proved that, for $n=2^s$ with $s\ge 4$,
\[
W_n^* \ge \cos \left(\frac{\pi}{2n} + \frac{1}{2}\arctan \left(\tan \frac{2\pi}{n}\tan \frac{\pi}{n}\right)-\frac{1}{2}\arcsin\left(\frac{\sin (2\pi/n) \sin (\pi/n)}{\sqrt{4\sin^2(\pi/n) + \cos (4\pi/n)}}\right)\right),
\]
which implies
\[
\ub{W}_n - W_n^* \le \frac{\pi^5}{4n^5} + O\left(\frac{1}{n^7}\right).
\]

\section{The equilateral small octagon of maximal width}\label{sec:F8}

\subsection{Definitions}
Cartesian coordinates are used to describe an $n$-gon $\geo{P}_n$. The vertex $\geo{v}_k$, $k\in \{0,1,\ldots,n-1\}$, is positioned at abscissa $x_k$ and ordinate $y_k$. Sums or differences of coordinate indices are taken modulo~$n$. Without any loss of generality, the vertex $\geo{v}_0$ is placed at the origin: $x_0 = y_0 = 0$, the $n$-gon $\geo{P}_n$ belongs to the half-plane $y\ge 0$ and the vertices $\geo{v}_k$, $k \in \{1,2,\ldots,n-1\}$, are arranged in a counterclockwise order as illustrated in Figure~\ref{figure:model}, i.e., $x_iy_{i+1} \ge y_ix_{i+1}$ for all $i \in \{1,2,\ldots,n-2\}$. The $n$-gon $\geo{P}_n$ is small if $\max_{i,j} \|\geo{v}_i - \geo{v}_j\| = 1$. It is equilateral if $\|\geo{v}_i - \geo{v}_{i-1}\| = c$ for all $i\in \{1,2,\ldots,n\}$.

\begin{figure}[h]
	\centering
	\begin{tikzpicture}[scale=4]
		\draw[dashed] (0,0) node[below]{$\geo{v}_0(0,0)$} -- (0.3228,0.2134) node[right]{$\geo{v}_1(x_1,y_1)$} -- (0.5000,0.5574) node[right]{$\geo{v}_2(x_2,y_2)$} -- (0.3796,0.9251) node[right]{$\geo{v}_3(x_3,y_3)$} -- (0,1) node[above]{$\geo{v}_4(x_4,y_4)$} -- (-0.3796,0.9251) node[left]{$\geo{v}_5(x_5,y_5)$} -- (-0.5000,0.5574) node[left]{$\geo{v}_6(x_6,y_6)$} -- (-0.3228,0.2134) node[left]{$\geo{v}_7(x_7,y_7)$} -- cycle;
		\draw[->] (-0.25,0)--(0.25,0)node[below]{$x$};
		\draw[->] (0,0)--(0,0.5)node[left]{$y$};
	\end{tikzpicture}
	\caption{Definition of variables: Case of $n=8$ vertices}
	\label{figure:model}
\end{figure}
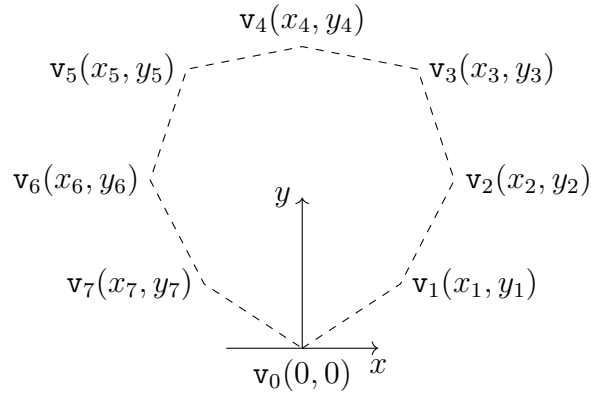

Let $i \in \{1,2,\ldots,n\}$. The distance $h_{ik}$ between a vertex $\geo{v}_k$, $k \in \{0,1,\ldots,n-1\}$, and the line containing the side $\geo{v}_{i-1}\geo{v}_i$ is
\[
h_{ik} := \frac{(x_{i-1}-x_k)(y_i-y_k) - (y_{i-1}-y_k)(x_i-x_k)}{\sqrt{(x_i - x_{i-1})^2 + (y_i - y_{i-1})^2}}.
\]
The {\em height $h_i$ associated to the side $\geo{v}_{i-1}\geo{v}_i$} is defined as the maximal value among $h_{ik}$ and we denote~$k_i$ the index of a vertex $\geo{v}_{k_i}$ such that $h_{ik_i} = h_i$. Then the width of the $n$-gon $\geo{P}_n$ is given by
\[
W(\geo{P}_n) = \min_{i =1,2,\ldots,n} h_i.
\]

We define the {\em height graph} of the $n$-gon $\geo{P}_n$ as the graph with the vertices $\geo{v}_0, \geo{v}_0, \ldots, \geo{v}_{n-1}$ of $\geo{P}_n$, and the edges $\geo{v}_{i-1}-\geo{v}_{k_i}$ and $\geo{v}_{i}-\geo{v}_{k_i}$ for all $i\in \{1,2,\ldots,n\}$. By construction, the length of an edge of the height graph is greater than or equal to $W(\geo{P}_n)$. Remark that the diameter graph of $\geo{P}_n$ is a subgraph of its height graph. Figure~\ref{figure:8gon:height} shows height graphs of some small polygons, the edges are represented by solid and dotted segments. The solid lines illustrate pairs of vertices which are unit distance apart. The dotted lines connect pairs of vertices that are a distance greater or equal to the width of the polygon but less than one. The dashed lines are not part of the graph, as they simply represent the sides of the polygons.

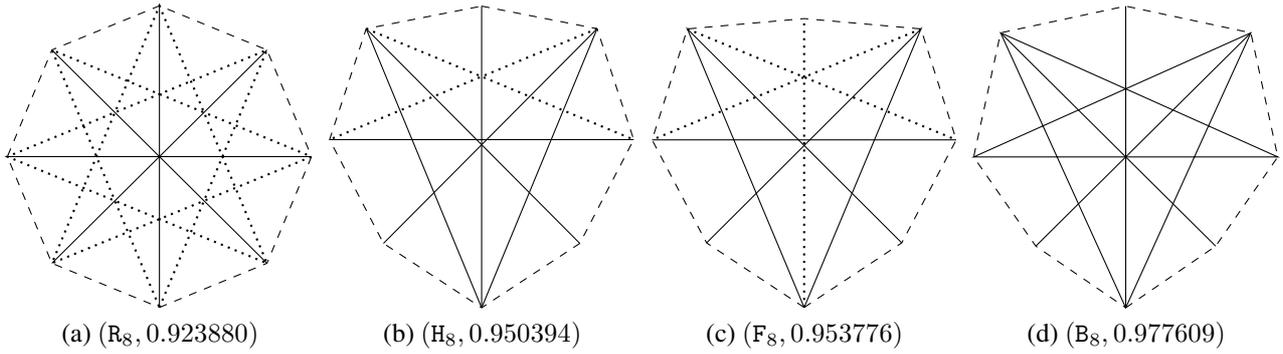
\begin{figure}[h]
	\centering
	\subfloat[$(\geo{R}_8,0.923880)$]{
		\begin{tikzpicture}[scale=4]
			\draw[dashed] (0,0) -- (0.3536,0.1464) -- (0.5000,0.5000) -- (0.3536,0.8536) -- (0,1) -- (-0.3536,0.8536) -- (-0.5000,0.5000) -- (-0.3536,0.1464) -- cycle;
			\draw (0,0) -- (0,1);
			\draw (0.3536,0.1464) -- (-0.3536,0.8536);
			\draw (0.5000,0.5000) -- (-0.5000,0.5000);
			\draw (0.3536,0.8536) -- (-0.3536,0.1464);
			\draw[dotted,thick] (0,0) -- (0.3536,0.8536) -- (-0.5000,0.5000) -- (0.3536,0.1464) -- (0,1) -- (-0.3536,0.1464) -- (0.5000,0.5000) -- (-0.3536,0.8536) -- cycle;
		\end{tikzpicture}
	}
	\subfloat[$(\geo{H}_8,0.950394)$]{
		\begin{tikzpicture}[scale=4]
			\draw[dashed] (0,1) -- (0.3796,0.9251) -- (0.5000,0.5574) -- (0.3228,0.2134) -- (0,0) -- (-0.3228,0.2134) -- (-0.5000,0.5574) -- (-0.3796,0.9251) -- cycle;
			\draw (0,0) -- (0,1);
			\draw (0,0) -- (0.3796,0.9251);\draw (0,0) -- (-0.3796,0.9251);
			\draw (0.3796,0.9251) -- (-0.3228,0.2134);\draw (-0.3796,0.9251) -- (0.3228,0.2134);
			\draw (0.5000,0.5574) -- (-0.5000,0.5574);
			\draw[dotted,thick] (0.3796,0.9251) -- (-0.5000,0.5574); \draw[dotted,thick] (-0.3796,0.9251) -- (0.5000,0.5574);
		\end{tikzpicture}
	}
	\subfloat[$(\geo{F}_8,0.953776)$]{
		\begin{tikzpicture}[scale=4]
			\draw[dashed] (0,0) -- (0.3208,0.2140) -- (0.5000,0.5555) -- (0.3841,0.9233) -- (0,0.9576) -- (-0.3841,0.9233) -- (-0.5000,0.5555) -- (-0.3208,0.2140) -- cycle;
			\draw (0,0) -- (0.3841,0.9233) -- (-0.3208,0.2140);\draw (0,0) -- (-0.3841,0.9233) -- (0.3208,0.2140);
			\draw (0.5000,0.5555) -- (-0.5000,0.5555);
			\draw[dotted,thick] (0,0) -- (0,0.9576);
			\draw[dotted,thick] (0.3841,0.9233) -- (-0.5000,0.5555); \draw[dotted,thick] (-0.3841,0.9233) -- (0.5000,0.5555);
		\end{tikzpicture}
	}
	\subfloat[$(\geo{B}_8,0.977609)$]{
		\begin{tikzpicture}[scale=4]
			\draw[dashed] (0,0) -- (0.2957,0.2043) -- (0.5000,0.5000) -- (0.4114,0.9114) -- (0,1) -- (-0.4114,0.9114) -- (-0.5000,0.5000) -- (-0.2957,0.2043) -- cycle;
			\draw (0,0) -- (0.4114,0.9114) -- (-0.5000,0.5000) -- (0.5000,0.5000) -- (-0.4114,0.9114) -- cycle;
			\draw (0,0) -- (0,1);
			\draw (0.4114,0.9114) -- (-0.2957,0.2043);\draw (-0.4114,0.9114) -- (0.2957,0.2043);
		\end{tikzpicture}
	}
	\caption{Height graphs of some small $8$-gons $(\geo{P}_8,W(\geo{P}_8))$: (a) Regular $8$-gon; (b) Equilateral $8$-gon of maximal perimeter~\cite{audet2004}; (c) Equilateral $8$-gon of maximal width; (d) A non-equilateral $8$-gon of maximal width~\cite{audet2013}}
	\label{figure:8gon:height}
\end{figure}

\subsection{Bounds on the maximal width}
A trivial lower bound on the maximal width $w_8^*$ is provided by the regular $8$-gon $\geo{R}_8$. However, the convex equilateral small $8$-gon of maximal perimeter $\geo{H}_8$~\cite{audet2004}, illustrated in Figure~\ref{figure:8gon:H8}, provides a tighter bound: $W(\geo{H}_8) = 0.9503943246\ldots > W(\geo{R}_8) = \frac{1}{2}\sqrt{2+\sqrt{2}}$. This lower bound is improved in Proposition~\ref{thm:F8:bound}.

\begin{proposition}\label{thm:F8:bound}
There exists an equilateral small $8$-gon $\geo{F}_8$ such that
	\[
	W(\geo{F}_8) = 0.9537763006\ldots > W(\geo{H}_8),
	\]
	where $\geo{H}_8$ is the convex equilateral small octagon from~\cite{audet2004} that maximizes the perimeter.
\end{proposition}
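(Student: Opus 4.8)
The plan is to exhibit $\geo{F}_8$ explicitly and then to verify, by direct computation, that it is an equilateral small octagon of width $\tfrac{t_0^2+t_0}{2\sqrt{t_0^3+t_0^2-2t_0+1}}$; since the proposition only asserts a lower bound on $w_8^*$, no optimality has to be proved here. I would look for $\geo{F}_8$ among octagons symmetric about the vertical line through $\geo{v}_0$ and $\geo{v}_4$, so that $\geo{v}_{8-k}$ is the reflection of $\geo{v}_k$; with $\geo{v}_0$ at the origin, the seven coordinates $x_1,y_1,x_2,y_2,x_3,y_3,y_4$ then have to be determined. Guided by Figure~\ref{figure:8gon:F8} and Figure~\ref{figure:8gon:height}(c), the sought octagon satisfies the unit-distance relations of its diameter graph, $\|\geo{v}_0\geo{v}_3\|=\|\geo{v}_3\geo{v}_7\|=\|\geo{v}_2\geo{v}_6\|=1$ (the symmetric partners being automatic); the equilaterality relations $\|\geo{v}_0\geo{v}_1\|=\|\geo{v}_1\geo{v}_2\|=\|\geo{v}_2\geo{v}_3\|=\|\geo{v}_3\geo{v}_4\|=:c$; and the condition that its width is attained simultaneously at the three sides $\geo{v}_1\geo{v}_2$, $\geo{v}_2\geo{v}_3$, $\geo{v}_3\geo{v}_4$, i.e., $h_2=h_3=h_4$, where $h_2$ is the distance from $\geo{v}_5$ to line $\geo{v}_1\geo{v}_2$, $h_3$ that from $\geo{v}_6$ to line $\geo{v}_2\geo{v}_3$, and $h_4$ that from $\geo{v}_0$ to line $\geo{v}_3\geo{v}_4$ (while $h_1=\sqrt{1-c^2/4}$, the apex altitude of the isosceles triangle $\geo{v}_0\geo{v}_1\geo{v}_5$, turns out to be larger). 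Expanding these relations via the formula for $h_{ik}$ gives a polynomial system whose geometrically relevant solution is $\geo{F}_8$.

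The heart of the matter is the elimination. Introducing a single appropriate parameter $t$ of the construction and eliminating the coordinates together with $c$, I expect the system to reduce to $p(t):=t^5-6t^3+3t^2+10t-7=0$, after which $c^2$ and each vertex coordinate become rational functions of $t$ and the common width works out to $\tfrac{t^2+t}{2\sqrt{t^3+t^2-2t+1}}$. The root is then located: $p(0)=-7<0$ and $p(1)=1>0$, while $p'(t)=5t^4-18t^2+6t+10>0$ for all $t>0$ — for $t\ge 2$ since $5t^4\ge 20t^2>18t^2$ forces $p'(t)\ge 2t^2+6t+10>0$, and on $[0,2]$ by an elementary estimate — so $p$ is strictly increasing on $[0,\infty)$ with the unique positive root $t_0=0.7682191676\ldots$, yielding $W(\geo{F}_8)=0.9537763006\ldots$.

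It remains to check that the octagon determined by $t_0$ is genuinely admissible; this is a finite computation with the now-explicit coordinates. One verifies that it is convex with its vertices in counterclockwise order, that its eight edges all have length $c$, and that no pair of vertices lies at distance more than $1$ — so its diameter is exactly $1$, attained along the five edges of the diameter graph — and finally that the minimum over $i$ of $h_i=\max_k h_{ik}$ equals $0.9537763006\ldots$. Hence $\geo{F}_8$ is an equilateral small octagon and $w_8^*\ge W(\geo{F}_8)=0.9537763006\ldots>0.9503943246\ldots=W(\geo{H}_8)$, which proves the proposition. The principal obstacle is the elimination: one must choose the parameter so that the symmetric polynomial system collapses to exactly this quintic rather than to a larger or reducible object, and push through the algebra producing the stated closed form. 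The admissibility checks, though routine, must actually be performed, because if some pairwise distance exceeded $1$ the octagon would fail to be small, and if any $h_i$ fell below the claimed common value the computed number would overstate the width and the comparison with $W(\geo{H}_8)$ could break down.
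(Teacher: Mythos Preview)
Your plan is correct and matches the paper's construction: a symmetric equilateral octagon with the prescribed diameter graph, determined by a small system that collapses to the quintic, followed by the admissibility checks. The paper's key simplification --- which answers your worry about the elimination --- is to parametrize by the angles $\alpha=\angle\geo{v}_4\geo{v}_0\geo{v}_3$, $\beta=\angle\geo{v}_0\geo{v}_3\geo{v}_7$ and the length $d=\|\geo{v}_0-\geo{v}_4\|$ (imposing $\|\geo{v}_0-\geo{v}_4\|=\|\geo{v}_2-\geo{v}_5\|=\|\geo{v}_3-\geo{v}_6\|=d$, which is equivalent to your $h_2=h_3=h_4$), giving three short equations $1+d^2-2d\cos\alpha=2-2\cos\beta$, $3\alpha+\beta=\pi/2$, $d=\frac{2\sin\alpha+1}{2\sin(2\alpha+\beta)}$ whose solution has $t:=2\sin\alpha$ satisfying the quintic and width $W(\geo{F}_8)=\dfrac{d\sin\alpha}{2\sin(\beta/2)}=\dfrac{t^2+t}{2\sqrt{t^3+t^2-2t+1}}$.
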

\begin{proof}
Consider an equilateral small $8$-gon $\geo{P}_8$ having the following height graph: a cycle $\geo{v}_{0} - \geo{v}_3 - \geo{v}_6 - \geo{v}_2 - \geo{v}_5 -\geo{v}_0$ of length five, plus three pendant edges $\geo{v}_{0} - \geo{v}_4$, $\geo{v}_3 - \geo{v}_7$, and $\geo{v}_5 - \geo{v}_1$, as illustred in Figure~\ref{figure:model:optimal}. In addition, $\geo{P}_8$ is axially symmetrical with respect to the edge $\geo{v}_{0}-\geo{v}_4$ and the three edges represented by dotted lines possess the same length $d$, i.e., $\|\geo{v}_{0}-\geo{v}_4\| = \|\geo{v}_{2}-\geo{v}_5\| = \|\geo{v}_{3}-\geo{v}_6\| = d \le 1$.

\begin{figure}
	\centering
	\begin{tikzpicture}[scale=8]
		\draw[dashed] (0,0) node[below]{$\geo{v}_0(0,0)$} -- (0.3208,0.2140) node[right]{$\geo{v}_1(x_1,y_1)$} -- (0.5000,0.5555) node[right]{$\geo{v}_2(x_2,y_2)$} -- (0.3841,0.9233) node[right]{$\geo{v}_3(x_3,y_3)$} -- (0,0.9576) node[above]{$\geo{v}_4(x_4,y_4)$} -- (-0.3841,0.9233) node[left]{$\geo{v}_5(x_5,y_5)$} -- (-0.5000,0.5555) node[left]{$\geo{v}_6(x_6,y_6)$} -- (-0.3208,0.2140) node[left]{$\geo{v}_7(x_7,y_7)$} -- cycle;
		\draw (0,0) -- (0.3841,0.9233) -- (-0.3208,0.2140);\draw (0,0) -- (-0.3841,0.9233) -- (0.3208,0.2140);
		\draw (0.5000,0.5555) -- (-0.5000,0.5555);
		\draw[dotted,thick] (0,0) -- (0,0.9576);
		\draw[dotted,thick] (0.3841,0.9233) -- (-0.5000,0.5555); \draw[dotted,thick] (-0.3841,0.9233) -- (0.5000,0.5555);
		\draw (0.0960,0.2308) arc (67.41:90.00:0.25) node[midway,above]{$\alpha$};
		\draw (0.2079,0.7460) arc (225.18:247.41:0.25) node[midway,below]{$\beta$};
		\draw (0.1631,0.8313) arc (202.59:225.18:0.24) node[midway,left]{$\alpha$};
		\draw (-0.3500,0.5555) arc (0.00:22.59:0.15) node[midway,right]{$\alpha$};
	\end{tikzpicture}
	\caption{Configuration of the $8$-gon providing a lower bound on the maximal width $w_8^*$}
	\label{figure:model:optimal}
\end{figure}
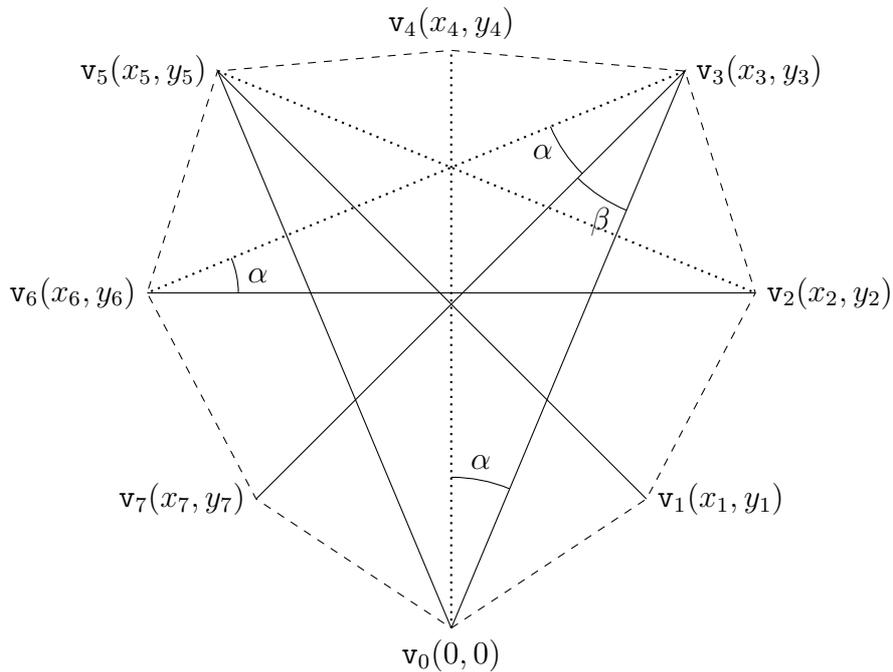

Place the vertex $\geo{v}_4$ at $(0,d)$ in the plane. Let $\alpha := \angle \geo{v}_4 \geo{v}_0 \geo{v}_3 \in (0,\pi/6)$ and $\beta := \angle \geo{v}_0 \geo{v}_3 \geo{v}_7 \in (0,\pi/8)$. Since $\geo{P}_8$ is equilateral, we have $\|\geo{v}_4-\geo{v}_3\| = \|\geo{v}_0-\geo{v}_7\|$, which implies
\begin{equation}\label{eq:F8:condition:dab}
	1 + d^2 - 2d\cos \alpha = 2 - 2 \cos \beta.
\end{equation}
Since $\geo{P}_8$ is symmetric, we have from the cycle $\geo{v}_0 - \geo{v}_3 - \geo{v}_6 - \geo{v}_2 - \geo{v}_5 - \geo{v}_{0}$,
\begin{equation}\label{eq:F8:condition:ab}
	3\alpha + \beta = \pi/2.
\end{equation}
Since the edge $\geo{v}_6 - \geo{v}_2$ is horizontal and $\|\geo{v}_6 - \geo{v}_2\| = 1$, we also have $x_6 = -1/2 = - x_2$, which yields
\begin{equation}\label{eq:F8:x6}
	d = \frac{2\sin \alpha +1}{2\sin(2\alpha + \beta)}.
\end{equation}

The system of equations~\eqref{eq:F8:condition:dab},~\eqref{eq:F8:condition:ab}, and~\eqref{eq:F8:x6} has a solution
\[
(\alpha_0,\beta_0,d_0) = (0.3942432313\ldots, 0.3880666326\ldots, 0.9575669263\ldots).
\]
We can show that $t_0 = 2\sin \alpha_0 = 0.7682191676\ldots$ is the unique positive root of the polynomial equation
\[
t^5-6t^3+3t^2+10t-7=0.
\]
Let $\geo{F}_8$ denote the $8$-gon obtained by setting $(\alpha,\beta,d) = (\alpha_0,\beta_0,d_0)$. The width of $\geo{F}_8$ is
\[
W(\geo{F}_8) = \frac{d_0\sin \alpha_0}{2\sin (\beta_0/2)} = 0.9537763006\ldots > W(\geo{H}_8).
\]
By construction, $\geo{F}_8$ is small and its vertices are given by
\[
\begin{aligned}
	x_0 &= 0,&&&y_0 &= 0,\\
	x_1 &= 0.3208100713\ldots &= -x_7, &&y_1 &= 0.2140003477\ldots &= y_7,\\
	x_2 &= 1/2 &= -x_6, &&y_2 &= 0.5554768772\ldots &= y_6,\\
	x_3 &= 0.3841095838\ldots &= -x_5, &&y_3 &= 0.9232875108\ldots &= y_5,\\
	x_4 &= 0, &&&y_4 &= 0.9575669263\ldots.
\end{aligned}
\]
\end{proof}

From Theorem~\ref{thm:width}, $w_8^* < W_8^* < \ub{W}_8$. A tighter upper bound is obtained by using Theorem~\ref{thm:width:perimeter}.

\begin{theorem}[Audet, Hansen, and Messine~\cite{audet2009b}]\label{thm:width:perimeter}
	For all $n \ge 3$, let $L(\geo{P}_n)$ denote the perimeter of an $n$-gon $\geo{P}_n$ and $W(\geo{P}_n)$ its width. Then $\frac{W(\geo{P}_n)}{L(\geo{P}_n)} \le \frac{1}{2n} \cot \frac{\pi}{2n}$.
\end{theorem}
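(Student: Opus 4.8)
The plan is to rewrite the perimeter of a convex polygon as an explicit sum over the angular gaps between consecutive outward edge-normals and then to finish with a single application of Jensen's inequality to $\tan$. First I would reduce to the convex case: replacing $\geo{P}_n$ by its convex hull leaves the width unchanged (the width is the minimum directional extent of the vertex set, which depends only on the convex hull) while not increasing the perimeter, so the ratio $W/L$ can only grow; the hull is a convex $m$-gon with $m\le n$, and since $u\mapsto(\tan u)/u$ is strictly increasing on $(0,\pi/2)$ the map $m\mapsto\tfrac{1}{2m}\cot\tfrac{\pi}{2m}$ is strictly increasing, so the estimate for $m$ vertices implies the one for $n$. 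Hence it suffices to prove $\dfrac{W(\geo{P})}{L(\geo{P})}\le\dfrac{1}{2m}\cot\dfrac{\pi}{2m}$ for a convex $m$-gon $\geo{P}$; the limiting case where the hull degenerates to a segment is trivial since then $W=0$.

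So let $\geo{P}$ be a nondegenerate convex polygon with outward edge-normals at angles $\eta_1,\dots,\eta_m$ and corresponding edge lengths $\ell_1,\dots,\ell_m$, and for an angle $\theta$ let $w(\theta)$ be the distance between the two lines supporting $\geo{P}$ perpendicular to that direction; thus $W(\geo{P})=\min_\theta w(\theta)$, and $w(\eta_j)$ is exactly the height over edge $j$, so $w(\eta_j)\ge W(\geo{P})$. I would use the rotating-calipers identity $w(\theta)=\tfrac12\sum_{j=1}^m\ell_j\,|\sin(\theta-\eta_j)|$; integrating it over $[0,\pi]$ and using $\int_0^\pi|\sin(\theta-c)|\,d\theta=2$ recovers Cauchy's perimeter formula $\int_0^\pi w(\theta)\,d\theta=L(\geo{P})$. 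Now partition $[0,\pi)$ at the points $\eta_1,\dots,\eta_m$ reduced modulo $\pi$, giving arcs of angular lengths $\lambda_1,\dots,\lambda_k$ with $k\le m$ and $\sum_{i}\lambda_i=\pi$. On the interior of one such arc none of the factors $\sin(\theta-\eta_j)$ changes sign, so there $w$ is a single positive sinusoid $R\sin(\theta+\varphi)$, and a one-line sum-to-product computation gives the key identity
\[
\int_{\text{arc }i} w(\theta)\,d\theta=(p_i+q_i)\tan\frac{\lambda_i}{2},
\]
where $p_i,q_i$ denote the values of $w$ at the two endpoints of arc $i$.

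Each endpoint of each arc sits at some $\eta_j$ modulo $\pi$, so $p_i$ and $q_i$ are heights over edges of $\geo{P}$ and hence at least $W(\geo{P})$; therefore
\[
L(\geo{P})=\sum_{i=1}^{k}(p_i+q_i)\tan\frac{\lambda_i}{2}\ \ge\ 2\,W(\geo{P})\sum_{i=1}^{k}\tan\frac{\lambda_i}{2}\ \ge\ 2\,W(\geo{P})\,k\tan\frac{\pi}{2k}\ \ge\ 2\,W(\geo{P})\,m\tan\frac{\pi}{2m},
\]
the middle step being Jensen for the convex function $\tan$ on $(0,\pi/2)$ with $\tfrac1k\sum_i\tfrac{\lambda_i}{2}=\tfrac{\pi}{2k}$, and the last using $k\le m$ together with the fact that $x\mapsto x\tan\tfrac{\pi}{2x}$ is decreasing (the same monotonicity as in the reduction step). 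Rearranging yields $\dfrac{W(\geo{P})}{L(\geo{P})}\le\dfrac{1}{2m}\cot\dfrac{\pi}{2m}\le\dfrac{1}{2n}\cot\dfrac{\pi}{2n}$, which is the assertion; tracing the equality cases back shows equality demands $k=m=n$, all $\lambda_i=\pi/n$, and all edge-heights equal to $W$ — realized, for odd $n$, only by the regular small $n$-gon.

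I expect the genuine crux to be spotting the arc identity $\int_{\text{arc }i}w=(p_i+q_i)\tan\tfrac{\lambda_i}{2}$: once one decides to cut Cauchy's integral precisely at the edge-normal directions, so that $w$ is a single sinusoidal arc on each piece, the identity is an elementary computation and the rest of the argument is routine. The remaining care is bookkeeping for degenerate configurations: parallel edges merely merge arcs (which strictly strengthens the bound through $k<m$), the segment case is the trivial one noted above, and one should check that $\lambda_i<\pi$ so that $\tan(\lambda_i/2)$ is finite — true because a nondegenerate convex polygon always produces at least two arcs.
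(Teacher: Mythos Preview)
The paper does not prove Theorem~\ref{thm:width:perimeter}; it is quoted from Audet, Hansen, and Messine~\cite{audet2009b} and used only as a black box to obtain the upper bound $w_8^* < 0.9727$. There is therefore no in-paper argument to compare against.

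Your proof is correct. The reduction to the convex hull is sound (width depends only on the hull, the hull's perimeter does not exceed that of a simple polygon, and the monotonicity of $m\mapsto\frac{1}{2m}\cot\frac{\pi}{2m}$ carries the bound from $m$ back up to $n$). The core step --- cutting Cauchy's integral $\int_0^\pi w(\theta)\,d\theta=L$ at the edge-normal directions modulo $\pi$, so that on each subarc $w$ is a single sinusoid $R\sin(\theta+\varphi)$, and then using
\[
\int_a^b R\sin(\theta+\varphi)\,d\theta
= R\bigl(\cos(a+\varphi)-\cos(b+\varphi)\bigr)
= \bigl(w(a)+w(b)\bigr)\tan\tfrac{b-a}{2}
\]
via sum-to-product --- is valid, and the endpoint values $p_i,q_i$ are indeed heights over edges, hence $\ge W(\geo{P})$. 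Jensen for the convex function $\tan$ on $(0,\pi/2)$ and the fact that $x\mapsto x\tan\frac{\pi}{2x}$ is decreasing (equivalently, $u\mapsto(\tan u)/u$ is increasing) finish the chain of inequalities. Your degeneracy bookkeeping is also right: parallel edges only lower $k$, and a nondegenerate convex polygon has $k\ge 2$, so every $\lambda_i<\pi$ and $\tan(\lambda_i/2)$ is finite.
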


Since the perimeter of any convex equilateral small octagon, including those of maximal width~$w_8^*$, is bounded above by $L(\geo{H}_8)$, it follows that
\[
w_8^* < \frac{L(\geo{H}_8)}{16} \cot \frac{\pi}{16} < 0.9727 < W_8^*,
\]
with $L(\geo{H}_8) = 3.0956093174\ldots > L(\geo{R}_8) = 4\sqrt{2-\sqrt{2}}$.

\subsection{Height graph}
Let $\geo{P}_8^*$ be an equilateral small $8$-gon of maximal width, i.e., $W(\geo{P}_8^*) = w_8^*$, and let $\lb{w} := W(\geo{F}_8)$. If $c$ denotes the sides length of $\geo{P}_8^*$ then
\[
c < \ub{c} := \frac{L(\geo{H}_8)}{8} < 0.3870.
\]
A lower bound on $c$ is deduced from Theorem~\ref{thm:width:perimeter}:
\[
c \ge 2w_8^* \tan \frac{\pi}{16} \ge 2\lb{w} \tan \frac{\pi}{16} > 0.3794.
\]

Next, we study the height graph of the optimal octagon~$\geo{P}_8^*$. Combining the upper bound on~$c$ with the lower bound on~$w_8^*$, one gets that the furthest vertex $\geo{v}_{k_i}$ to the side $\geo{v}_{i-1}\geo{v}_i$ must be the third neighbor of the side. This leads to the following lemma:
\begin{lemma}\label{lemma:ki}
	For all $i \in \{1,2,\ldots,8\}$, $k_i \in \{i+3,i+4\}$.
\end{lemma}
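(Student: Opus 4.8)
The plan is to show that, for each side $\geo{v}_{i-1}\geo{v}_i$, every vertex other than the two third neighbours $\geo{v}_{i+3}$ and $\geo{v}_{i+4}$ lies too close to the line supporting that side to realise the height $h_i$. The starting point is the elementary remark that, since the line through $\geo{v}_{i-1}$ and $\geo{v}_i$ passes through both of these points, the distance $h_{ik}$ from any vertex $\geo{v}_k$ to this line satisfies
\[
h_{ik} \le \min\{\|\geo{v}_k - \geo{v}_{i-1}\|,\ \|\geo{v}_k - \geo{v}_i\|\},
\]
and, by the triangle inequality together with equilaterality, $\|\geo{v}_k - \geo{v}_j\|$ is at most $c$ times the number of edges separating $\geo{v}_k$ and $\geo{v}_j$ along the boundary cycle.

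Applying this to the vertices near the side, the first neighbours give $h_{i,i+1} \le \|\geo{v}_{i+1}-\geo{v}_i\| = c$ and $h_{i,i-2} \le \|\geo{v}_{i-2}-\geo{v}_{i-1}\| = c$, while the second neighbours give $h_{i,i+2} \le \|\geo{v}_{i+2}-\geo{v}_i\| \le 2c$ and $h_{i,i-3} \le \|\geo{v}_{i-3}-\geo{v}_{i-1}\| \le 2c$. Since $c < \ub{c} < 0.3870$, all four of these distances are at most $2c < 0.774 < \lb{w} \le w_8^* = W(\geo{P}_8^*) = \min_{j} h_j \le h_i$, so each of them is strictly smaller than $h_i = h_{i,k_i}$. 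Hence $k_i$ cannot equal $i+1$, $i+2$, $i-2$ or $i-3$. As $\geo{v}_{i-1}$ and $\geo{v}_i$ themselves sit on the line at distance $0 < h_i$, the index $k_i$ is forced to lie in $\{i+3,i+4\}$.

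There is no serious obstacle here: the lemma reduces to comparing the guaranteed lower bound $\lb{w}$ on the width against crude path-length upper bounds on the heights of the near vertices. The only point that needs a little care is, for each of the four excluded vertices, to measure to whichever of the two endpoints $\geo{v}_{i-1}$, $\geo{v}_i$ is reached along the shorter boundary arc, so that the bound $\le 2c$ genuinely holds; since the configuration is symmetric under swapping the two endpoints of the side, one such estimate covers both the $i+\cdot$ and the $i-\cdot$ cases simultaneously, and the argument is independent of $i$.
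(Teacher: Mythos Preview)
Your proof is correct and follows essentially the same approach as the paper: bound the height $h_{ik}$ by the distance from $\geo{v}_k$ to the nearer endpoint of the side, use the path-length estimate to get $h_{ik}\le 2c<2\ub{c}<\lb{w}\le w_8^*\le h_i$ for the four near vertices, and conclude. The only cosmetic difference is that the paper writes the excluded indices as $i+5,i+6$ (working modulo~$8$) where you write $i-3,i-2$.
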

\begin{proof}
	Since $c < \ub{c} < 0.3870$ and $w_8^* \ge \lb{w} > 0.9537 > 2\ub{c}$, we have $k_i \not\in\{i+1, i+2, i+5,i+6\}$.
\end{proof}
The next lemma shows that two edges of the optimal height graph cannot be disjoint.
\begin{lemma}\label{lemma:ki:4gon}
	For all $i \in \{1,2,\ldots,8\}$, $\min \{\|\geo{v}_i-\geo{v}_{i+3}\|, \|\geo{v}_{i+4}-\geo{v}_{i-1}\|\} < w_8^*$.
\end{lemma}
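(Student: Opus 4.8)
I would prove the statement by contradiction. Suppose, for some $i$, both $\|\geo{v}_i-\geo{v}_{i+3}\|\ge w_8^*$ and $\|\geo{v}_{i+4}-\geo{v}_{i-1}\|\ge w_8^*$. The object to focus on is the quadrilateral $Q$ with vertices $\geo{v}_{i-1},\geo{v}_i,\geo{v}_{i+3},\geo{v}_{i+4}$, listed in this cyclic order. Since these are four vertices of the convex octagon $\geo{P}_8^*$ taken in cyclic order, $Q$ is a convex quadrilateral, so its four interior angles lie in $(0,\pi)$ and sum to $2\pi$. I would first collect the data about $Q$: the two sides $\geo{v}_i\geo{v}_{i+3}$ and $\geo{v}_{i+4}\geo{v}_{i-1}$ have length at least $w_8^*\ge W(\geo{F}_8)=:\lb{w}$ by Proposition~\ref{thm:F8:bound}; the two remaining sides $\geo{v}_{i-1}\geo{v}_i$ and $\geo{v}_{i+3}\geo{v}_{i+4}$ have length $c$ with $c\ge 2w_8^*\tan(\pi/16)\ge 2\lb{w}\tan(\pi/16)=:\lb{c}$, as already established; and both diagonals $\geo{v}_{i-1}\geo{v}_{i+3}$ and $\geo{v}_i\geo{v}_{i+4}$ have length at most $1$, because $\geo{P}_8^*$ is small.

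The heart of the argument is to show that every interior angle of $Q$ is acute. Each interior angle of $Q$ is an angle of one of the triangles obtained by cutting $Q$ along a diagonal, and in every one of the four cases it is the angle between a ``short'' side (length $c\ge\lb{c}$) and a ``long'' side (length $\ge\lb{w}$), the side opposite it being one of the two diagonals (length $\le 1$). By the law of cosines its cosine therefore equals $(c^2+\ell^2-d^2)/(2c\ell)$ with $\ell\ge\lb{w}$ and $d\le 1$, hence is at least $(\lb{c}^2+\lb{w}^2-1)/(2c\ell)$. A short numerical check gives $\lb{c}^2+\lb{w}^2=\lb{w}^2\bigl(1+4\tan^2(\pi/16)\bigr)>1$, so this cosine is strictly positive at all four vertices and all four interior angles of $Q$ are acute. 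Four acute angles cannot sum to $2\pi$, which is the desired contradiction; hence $\min\{\|\geo{v}_i-\geo{v}_{i+3}\|,\|\geo{v}_{i+4}-\geo{v}_{i-1}\|\}<w_8^*$.

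The one step that needs care is the numerical inequality $\lb{w}^2\bigl(1+4\tan^2(\pi/16)\bigr)>1$: its margin is modest ($\lb{c}^2+\lb{w}^2=1.053\ldots$), so it is crucial to use both the sharp perimeter-based lower bound $c\ge 2w_8^*\tan(\pi/16)$ coming from Theorem~\ref{thm:width:perimeter} and the explicit value $w_8^*\ge W(\geo{F}_8)=0.9537763\ldots$ from Proposition~\ref{thm:F8:bound}; replacing $W(\geo{F}_8)$ by the weaker width $W(\geo{R}_8)=\cos(\pi/8)$ would make the inequality fail, so the lemma genuinely relies on the lower bound just constructed. A secondary point worth stating explicitly is that $Q$ is genuinely convex, so that its interior angles sum to exactly $2\pi$ and each of them really is the corresponding triangle angle; this is the standard fact that a subfamily of the vertices of a convex polygon, taken in cyclic order, spans a convex polygon.
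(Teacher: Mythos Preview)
Your proof is correct and is essentially the same argument as the paper's: both consider the convex quadrilateral $\geo{v}_{i-1}\geo{v}_i\geo{v}_{i+3}\geo{v}_{i+4}$, combine the law of cosines with the numerical bound $\lb{c}^2+\lb{w}^2>1$ and the diameter constraint, and obtain a contradiction with the angle sum $2\pi$. The only difference is the direction of the contrapositive---you show all four angles are acute, while the paper (modulo an apparent typo: its displayed $\max$ should involve the quadrilateral's diagonals $\|\geo{v}_{i-1}-\geo{v}_{i+3}\|,\|\geo{v}_i-\geo{v}_{i+4}\|$) argues that some angle is at least $\pi/2$ and hence a diagonal exceeds~$1$.
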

\begin{proof}
	Without loss of generality, assume that for $i=1$, $\min\{\|\geo{v}_1-\geo{v}_4\|,\|\geo{v}_5-\geo{v}_0\|\} \ge w_8^* > 0.9537$. It follows that
	\[
	1 \ge \max\{\|\geo{v}_1-\geo{v}_4\|,\|\geo{v}_5-\geo{v}_0\|\} \ge \sqrt{{w_8^*}^2 + c^2} > \sqrt{0.9537^2 + 0.3794^2} > 1.0263.
	\]
	This contradiction implies that at most one of the two diagonals may exceed $w_8^*$.
\end{proof}

Combining Lemma~\ref{lemma:ki} and Lemma~\ref{lemma:ki:4gon}, one obtains additional conditions that the furthest vertices must satisfy. This leads to the following proposition:

\begin{proposition}\label{thm:ki:height}
	For all $i \in \{1,2,\ldots,8\}$, $k_i = i + 3 \Leftrightarrow k_{i+4} = i$ and $k_i = i + 4 \Leftrightarrow k_{i+4} = i-1$.
\end{proposition}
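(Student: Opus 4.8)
The plan is to exploit the symmetry $i \leftrightarrow i+4$ that is built into the diameter/height structure by Lemma~\ref{lemma:ki}, together with the "no disjoint edges" statement of Lemma~\ref{lemma:ki:4gon}. First I would fix an arbitrary $i$ and recall from Lemma~\ref{lemma:ki} that $k_i \in \{i+3, i+4\}$ and, applying the lemma to the index $i+4$ (indices mod $8$), that $k_{i+4} \in \{i+4+3, i+4+4\} = \{i+7, i\} = \{i-1, i\}$. So for each $i$ there are exactly four a priori combinations: $(k_i,k_{i+4}) \in \{(i+3,i-1),\,(i+3,i),\,(i+4,i-1),\,(i+4,i)\}$. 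The proposition asserts that only the two "matched" combinations $(i+3,i)$ and $(i+4,i-1)$ can occur, and moreover that each forces the other; so the task reduces to ruling out the two "crossed" combinations $(i+3,i-1)$ and $(i+4,i)$, and then observing that the remaining two options pair up as claimed.

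The key step is to interpret each of the four combinations in terms of which diagonals must have length $\ge w_8^*$. Recall that $h_{i k_i} = h_i \ge W(\geo{P}_8^*) = w_8^*$ means precisely that the edge $\geo{v}_{i-1}-\geo{v}_{k_i}$ (and $\geo{v}_i-\geo{v}_{k_i}$) of the height graph has length at least $w_8^*$; in particular the "long" diagonal incident to the side $\geo{v}_{i-1}\geo{v}_i$ from its far endpoint has length $\ge w_8^*$. Unwinding: $k_i = i+3$ forces $\|\geo{v}_i - \geo{v}_{i+3}\| \ge w_8^*$, while $k_i = i+4$ forces $\|\geo{v}_{i-1} - \geo{v}_{i+4}\| \ge w_8^*$; similarly $k_{i+4} = i$ forces $\|\geo{v}_{i+3} - \geo{v}_i\| \ge w_8^*$ and $k_{i+4} = i-1$ forces $\|\geo{v}_{i+4} - \geo{v}_{i-1}\| \ge w_8^*$. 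Now apply Lemma~\ref{lemma:ki:4gon}, which says $\min\{\|\geo{v}_i - \geo{v}_{i+3}\|,\ \|\geo{v}_{i+4} - \geo{v}_{i-1}\|\} < w_8^*$: at most one of these two particular diagonals can be $\ge w_8^*$. The crossed combination $(k_i,k_{i+4}) = (i+4, i)$ would make both $\|\geo{v}_{i-1}-\geo{v}_{i+4}\| \ge w_8^*$ and $\|\geo{v}_{i+3}-\geo{v}_i\| \ge w_8^*$ — these are exactly the two diagonals $\|\geo{v}_i-\geo{v}_{i+3}\|$ and $\|\geo{v}_{i+4}-\geo{v}_{i-1}\|$, contradicting Lemma~\ref{lemma:ki:4gon}. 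Hence $(k_i,k_{i+4}) \ne (i+4,i)$. Replacing $i$ by $i+4$ (so the pair $(k_{i+4},k_i)$ cannot equal $(i+4+4, i+4) = (i, i+4)$ — wait, I must re-derive this carefully) gives the symmetric exclusion of the other crossed case $(k_i,k_{i+4}) = (i+3,i-1)$: indeed applying the already-proved fact "for every $j$, $(k_j, k_{j+4}) \ne (j+4, j)$" with $j = i+4$ yields $(k_{i+4}, k_i) \ne (i+8, i+4) = (i, i+4)$, i.e. it is \emph{not} the case that $k_{i+4} = i$ and $k_i = i+4$; that is the same crossed case again, so instead I should note that the case $(k_i,k_{i+4})=(i+3,i-1)$ is handled by applying Lemma~\ref{lemma:ki:4gon} at index $i-1$: there $\min\{\|\geo{v}_{i-1}-\geo{v}_{i+2}\|, \|\geo{v}_{i+3}-\geo{v}_{i-2}\|\} < w_8^*$, which does not immediately match, so the cleaner route is to observe the symmetry directly.

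The main obstacle — and the place to be careful — is precisely this bookkeeping of indices modulo $8$ and matching the diagonals appearing in Lemma~\ref{lemma:ki:4gon} to the diagonals forced by each $(k_i, k_{i+4})$ pair. The robust way to organize it: note that $\geo{v}_i-\geo{v}_{i+3}$ is the far diagonal of side $\geo{v}_{i-1}\geo{v}_i$ "going forward three," and $\geo{v}_{i-1}-\geo{v}_{i+4}$ is the far diagonal of side $\geo{v}_{i-1}\geo{v}_i$ "going forward five" $=$ the far diagonal of side $\geo{v}_{i+4}\geo{v}_{i+5}$ "going backward," and these two are exactly the pair in Lemma~\ref{lemma:ki:4gon}; one then checks that the crossed assignment $k_i = i+3, k_{i+4} = i-1$ forces $\|\geo{v}_i-\geo{v}_{i+3}\| \ge w_8^*$ and $\|\geo{v}_{i+4}-\geo{v}_{i-1}\| \ge w_8^*$ simultaneously (the other crossed assignment $k_i = i+4, k_{i+4}=i$ does the same for the reversed-orientation copy), each contradicting Lemma~\ref{lemma:ki:4gon}. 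Once both crossed cases are excluded, the surviving possibilities are $(k_i,k_{i+4}) \in \{(i+3,i),\ (i+4,i-1)\}$, which is exactly the biconditional content of the proposition: $k_i = i+3 \iff k_{i+4} = i$, and $k_i = i+4 \iff k_{i+4} = i-1$. I would present the argument once for a representative $i$ and invoke symmetry for the rest.
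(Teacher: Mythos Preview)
Your approach is correct and is essentially the paper's own argument, just organized as a case exclusion rather than as a chain of implications: the paper fixes $i=1$, shows $k_1=4\Rightarrow\|\geo{v}_1-\geo{v}_4\|\ge w_8^*\Rightarrow\|\geo{v}_5-\geo{v}_0\|<w_8^*\Rightarrow k_5\neq 0\Rightarrow k_5=1$ (and the converse, and the other pair similarly), which is exactly your exclusion of the two ``crossed'' combinations via Lemma~\ref{lemma:ki:4gon}. Your final paragraph already contains the complete argument; the detours in the middle (shifting $i\mapsto i+4$, trying Lemma~\ref{lemma:ki:4gon} at index $i-1$) are unnecessary, since Lemma~\ref{lemma:ki:4gon} applied at the single index~$i$ rules out \emph{both} crossed cases $(i+3,i-1)$ and $(i+4,i)$ directly.
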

\begin{proof}
	Without loss of generality, assume $i=1$. From Lemma~\ref{lemma:ki}, $k_1 \in \{4,5\}$.
	\begin{enumerate}
		\item If $k_1 = 4$ then $\|\geo{v}_1-\geo{v}_4\| \ge w_8^*$. It follows, from Lemma~\ref{lemma:ki:4gon}, $\|\geo{v}_5-\geo{v}_0\| < w_8^*$, which implies $k_5 \not= 0$. Finally, from Lemma~\ref{lemma:ki}, $k_5 = 1$.
		\item If $k_5 = 1$ then $\|\geo{v}_1-\geo{v}_4\| \ge w_8^*$. It follows, from Lemma~\ref{lemma:ki:4gon}, $\|\geo{v}_5-\geo{v}_0\| < w_8^*$, which implies $k_1 \not= 5$. Finally, from Lemma~\ref{lemma:ki}, $k_1 = 4$.
	\end{enumerate}
A similar reasoning yields $k_1 = 5 \Leftrightarrow k_5 = 0$.
\end{proof}

The next corollary details the structure of the optimal height graph.
\begin{corollary}\label{thm:height:optimal}
	The height graph of $\geo{P}_8^*$ has either a cycle of length~$7$ with one pendant edge, or a cycle of length~$5$ with three pendant edges from three non-consecutive vertices of the cycle.
\end{corollary}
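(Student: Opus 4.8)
The plan is to analyze the possible structures of the height graph of $\geo{P}_8^*$ using the edge-pairing established in Proposition~\ref{thm:ki:height}. First, I would note that the height graph is built from the $8$ pairs of edges $\{\geo{v}_{i-1}-\geo{v}_{k_i}, \geo{v}_i-\geo{v}_{k_i}\}$, one pair for each side $i$, and that Proposition~\ref{thm:ki:height} gives an involution on the index set $\{1,\dots,8\}$ pairing $i$ with $i+4$: either $(k_i,k_{i+4})=(i+3,i)$ or $(k_i,k_{i+4})=(i+4,i-1)$. So the whole height graph is determined by a choice, for each of the four pairs $\{i,i+4\}$ with $i\in\{1,2,3,4\}$, of one of two "types," giving at most $2^4=16$ candidate graphs to examine (fewer up to rotation/reflection symmetry).

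Next I would translate each choice into the actual edges contributed. If side $i$ has $k_i=i+3$, it contributes the path $\geo{v}_{i-1}-\geo{v}_{i+3}-\geo{v}_i$; if $k_i=i+4$, it contributes $\geo{v}_{i-1}-\geo{v}_{i+4}-\geo{v}_i$. The key combinatorial observation to extract is that, because the furthest vertex is always a third or fourth neighbor, each such path connects vertices of the same parity (in the "third-neighbor" case both endpoints $\geo{v}_{i-1},\geo{v}_i$ together with $\geo{v}_{i+3}$ have indices covering both parities — so I need to be careful here and just enumerate). The cleaner route: observe that following the edges $\geo{v}_{k_i}$ traces out a closed walk, and count. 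I would tabulate, for the $16$ cases, the resulting graph and identify its cycle structure, showing that in every case one obtains either a $7$-cycle plus one pendant edge, or a $5$-cycle plus three pendant edges attached at pairwise non-consecutive cycle vertices. A useful reduction: since each of the $8$ vertices appears as $\geo{v}_{k_i}$ for a certain number of indices $i$, and the pendant (degree-one) vertices are exactly those never serving as a furthest vertex, counting degrees pins down how many pendants there are; the pendant count must be $1$ or $3$ (it cannot be even by a parity/handshake argument on the $8$ edge-pairs, and cases with more pendants force disconnected or otherwise impossible configurations ruled out by Lemma~\ref{lemma:ki:4gon}).

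I expect the main obstacle to be the bookkeeping: verifying that no other cycle structure (e.g., a $6$-cycle with two pendants, or two disjoint cycles, or a cycle of length $8$) can arise. Ruling these out should follow from Lemma~\ref{lemma:ki:4gon} (two edges of the height graph cannot be disjoint, which kills a $2\times 4$-cycle split and more generally forbids the graph from breaking into pieces each missing some vertex) together with the rigidity of the involution from Proposition~\ref{thm:ki:height}. Concretely, I would argue that the four "type choices" cannot be made so as to create two vertex-disjoint edges without violating $\min\{\|\geo{v}_i-\geo{v}_{i+3}\|,\|\geo{v}_{i+4}-\geo{v}_{i-1}\|\}<w_8^*$, and then a short case check shows the only surviving graphs are the two claimed ones (up to the dihedral symmetry of the labeling). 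Once the enumeration is organized by the number of pendant vertices, the remaining verification is routine and the corollary follows.
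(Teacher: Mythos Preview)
Your plan is correct and is exactly the content behind the paper's one-line proof (``The result follows from Proposition~\ref{thm:ki:height}''): the $2^4=16$ vectors $\rv{k}$ compatible with that proposition fall, under the dihedral symmetry of the octagon, into precisely two orbits, with representatives $\rv{k}^a=(5,6,7,0,0,1,2,3)$ and $\rv{k}^b=(5,5,6,0,0,2,3,3)$, and inspecting those two yields the $7$-cycle-plus-pendant and the $5$-cycle-plus-three-pendants respectively. You are overcomplicating the endgame, though: once Proposition~\ref{thm:ki:height} is in hand, Lemma~\ref{lemma:ki:4gon} is not needed again, nor is any parity or handshake argument---no $6$-cycle, $8$-cycle, or disconnected graph ever appears among the sixteen configurations, so the bare enumeration (or the two-orbit reduction) already finishes the proof.
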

\begin{proof}
	The result follows from Proposition~\ref{thm:ki:height}.
\end{proof}

From Corollary~\ref{thm:height:optimal}, there are only two possible height graphs of $\geo{P}_8^*$, both represented in Figure~\ref{figure:8gon:height:optimal} and given by the vector of furthest vertices $\rv{k} = (k_1, k_2,\ldots,k_8)$: $k_i$ is the index of the furthest vertex to the side $\geo{v}_{i-1}\geo{v}_i$. The first configuration $\rv{k}^a =(5,6,7,0,0,1,2,3)$ in Figure~\ref{figure:8gon:height:optimal:k1} has a cycle of length~$7$ with one pendant edge and the second $\rv{k}^b =(5,5,6,0,0,2,3,3)$ in Figure~\ref{figure:8gon:height:optimal:k2} has a cycle of length~$5$ with three pendant edges.

\begin{figure}[h]
	\centering
	\subfloat[$\rv{k}^a=(5,6,7,0,0,1,2,3)$]{
		\begin{tikzpicture}[scale=4]
			\draw[dashed] (0,0) node[below]{$\geo{v}_0(0,0)$} -- (0.3536,0.1464) node[right]{$\geo{v}_1(x_1,y_1)$} -- (0.5000,0.5000) node[right]{$\geo{v}_2(x_2,y_2)$} -- (0.3536,0.8536) node[right]{$\geo{v}_3(x_3,y_3)$} -- (0,1) node[above]{$\geo{v}_4(x_4,y_4)$} -- (-0.3536,0.8536) node[left]{$\geo{v}_5(x_5,y_5)$} -- (-0.5000,0.5000) node[left]{$\geo{v}_6(x_6,y_6)$} -- (-0.3536,0.1464) node[left]{$\geo{v}_7(x_7,y_7)$} -- cycle;
			\draw[dotted,thick] (0,0) -- (0,1);
			\draw[dotted,thick] (0,0) -- (0.3536,0.8536) -- (-0.3536,0.1464) -- (0.5000,0.5000) -- (-0.5000,0.5000) -- (0.3536,0.1464) -- (-0.3536,0.8536) -- cycle;
		\end{tikzpicture}
	\label{figure:8gon:height:optimal:k1}
	}
	\subfloat[$\rv{k}^b=(5,5,6,0,0,2,3,3)$]{
		\begin{tikzpicture}[scale=4]
			\draw[dashed] (0,0) node[below]{$\geo{v}_0(0,0)$} -- (0.3208,0.2140) node[right]{$\geo{v}_1(x_1,y_1)$} -- (0.5000,0.5555) node[right]{$\geo{v}_2(x_2,y_2)$} -- (0.3841,0.9233) node[right]{$\geo{v}_3(x_3,y_3)$} -- (0,0.9576) node[above]{$\geo{v}_4(x_4,y_4)$} -- (-0.3841,0.9233) node[left]{$\geo{v}_5(x_5,y_5)$} -- (-0.5000,0.5555) node[left]{$\geo{v}_6(x_6,y_6)$} -- (-0.3208,0.2140) node[left]{$\geo{v}_7(x_7,y_7)$} -- cycle;
			\draw[dotted,thick] (0,0) -- (0.3841,0.9233) -- (-0.3208,0.2140);\draw[dotted,thick] (0,0) -- (-0.3841,0.9233) -- (0.3208,0.2140);
			\draw[dotted,thick] (0.5000,0.5555) -- (-0.5000,0.5555);
			\draw[dotted,thick] (0,0) -- (0,0.9576);
			\draw[dotted,thick] (0.3841,0.9233) -- (-0.5000,0.5555); \draw[dotted,thick] (-0.3841,0.9233) -- (0.5000,0.5555);
		\end{tikzpicture}
	\label{figure:8gon:height:optimal:k2}
	}
	\caption{Possible optimal height graphs $\rv{k} = (k_1,k_2, \ldots,k_8)$: (a) Configuration~$\rv{k}^a$; (b) Configuration~$\rv{k}^b$}
	\label{figure:8gon:height:optimal}
\end{figure}

In both configurations $\rv{k}^a$ and $\rv{k}^b$, there exists an index $i \in \{1,2,\ldots,8\}$ such that $k_i = i + 3$ and $k_{i+3} = i-1$. This property allows to further tighten bounds of some attributes of the optimal octagon in Proposition~\ref{thm:ki:tight}.

\begin{proposition}\label{thm:ki:tight}
	For all $i \in \{1,2,\ldots,8\}$, if $k_i = i + 3$ and $k_{i+3} = i-1$ then $c > 0.3844$, $w_8^* < 0.9564$, and $\max\{\|\geo{v}_{i-1}-\geo{v}_{i+2}\|, \|\geo{v}_{i}-\geo{v}_{i+3}\|\} < 0.9644$.
\end{proposition}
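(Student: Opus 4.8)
The plan is to reduce the statement to a small trigonometric optimisation over the ``upper half'' of the octagon. As observed just above, both possible height graphs contain an index with $k_i=i+3$ and $k_{i+3}=i-1$, and since a cyclic relabelling of the vertices carries one such index to $1$ while preserving every hypothesis, it suffices to treat $i=1$, i.e. $k_1=4$ and $k_4=0$. Put $\geo{v}_0=(0,0)$, $\geo{v}_1=(c,0)$ with the octagon in $y\ge0$, and describe the chain $\geo{v}_0\geo{v}_1\geo{v}_2\geo{v}_3\geo{v}_4$ by the exterior turning angles $\beta_1,\beta_2,\beta_3$ at $\geo{v}_1,\geo{v}_2,\geo{v}_3$; writing $s_j:=\beta_1+\dots+\beta_j$, the coordinates of $\geo{v}_2,\geo{v}_3,\geo{v}_4$ are explicit trigonometric expressions in $c,s_1,s_2,s_3$, and since $h_1=y_4=\max_k h_{1k}=\max_k y_k$ the vertex $\geo{v}_4$ is the highest one, so convexity forces $\beta_1,\beta_2,\beta_3>0$ and $s_3\le\pi$. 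Hence the entire content of the proposition is a statement about the four scalars $(c,\beta_1,\beta_2,\beta_3)$, with no other vertex of $\geo{P}_8^*$ involved.

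Next I would write down, as inequalities in these variables, everything already known. Smallness gives $\|\geo{v}_0\geo{v}_3\|\le1$, $\|\geo{v}_1\geo{v}_4\|\le1$, $\|\geo{v}_0\geo{v}_4\|\le1$ (the remaining distances among $\geo{v}_0,\dots,\geo{v}_4$ are below $2\ub{c}<1$ and give nothing). The height graph gives $\|\geo{v}_0\geo{v}_3\|,\|\geo{v}_1\geo{v}_4\|,\|\geo{v}_0\geo{v}_4\|\ge w_8^*$; and $k_1=4$, $k_4=0$ yield the sharper perpendicular-height bounds $h_1=y_4\ge w_8^*$ and $h_4\ge w_8^*$, where $h_4$ is the distance from $\geo{v}_0$ to the line through $\geo{v}_3$ and $\geo{v}_4$. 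Finally Theorem~\ref{thm:width:perimeter} and the bounds already derived give $2w_8^*\tan\frac{\pi}{16}\le c< \frac{1}{8}L(\geo{H}_8)$ together with $w_8^*\ge\lb{w}=W(\geo{F}_8)=0.9537\ldots$; since $w_8^*\ge\lb{w}$, each ``$\ge w_8^*$'' may be weakened to the explicit constant $\lb{w}$. I would also note that the feasible region so obtained is invariant under the reversal $(\beta_1,\beta_2,\beta_3)\mapsto(\beta_3,\beta_2,\beta_1)$, which exchanges $h_1\leftrightarrow h_4$ and $\|\geo{v}_0\geo{v}_3\|\leftrightarrow\|\geo{v}_1\geo{v}_4\|$ and fixes $c$ and $\|\geo{v}_0\geo{v}_4\|$, so one may assume without loss of generality that $h_1\le h_4$.

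The three assertions then read: (a) $\min c>0.3844$ over the feasible region; (b) $\|\geo{v}_1\geo{v}_4\|<0.9644$ there (this being, under $h_1\le h_4$, the larger of the two diagonals); (c) $w_8^*<0.9564$, for which, because $w_8^*\le\min\{h_1,h_4\}=h_1$, it is enough to show that adjoining the extra constraint $h_1\ge0.9564$ makes the region empty. The reason these hold, with the stated margins, is a tension between two groups of constraints: $\|\geo{v}_0\geo{v}_4\|\le1$ and $\|\geo{v}_0\geo{v}_3\|\le1$ push the turning angles upward, bending the four-edge chain, while $h_1\ge\lb{w}$ and $h_4\ge\lb{w}$ force it to stay nearly straight and tall, i.e. the angles small; the precise balance between ``$\geo{v}_4$ at height at least $\lb{w}$'' and ``$\geo{v}_0\geo{v}_4$ of length at most $1$'' is what pins $c$ from below near $0.3844$ and simultaneously caps the diagonals and $h_1$.

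To make this rigorous I would identify the active constraints at each extremum and thereby cut down the dimension. I expect the minimiser of $c$ and the maximiser of the diagonals to have $\|\geo{v}_0\geo{v}_4\|=1$ (this equality already holds for $\geo{F}_8$) and, by the reversal symmetry, $\beta_1=\beta_3$, with one further height constraint active; after imposing these the problem becomes a one-variable trigonometric inequality, which I would finish either in closed form---eliminating the remaining angle produces a low-degree polynomial, in the spirit of the quintic $t^5-6t^3+3t^2+10t-7$ already met for $\geo{F}_8$---or, if no clean closed form appears, by a verified interval enclosure over the compact box $c\in[0.3794,0.3870]$, $\beta_2\in[0,\pi]$. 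The genuine difficulty, and the step needing the most care, is justifying this reduction: one must rule out feasible points realising a different pattern of binding constraints (for instance with $\|\geo{v}_0\geo{v}_3\|=1$ the tight diameter rather than $\|\geo{v}_0\geo{v}_4\|=1$, or with the symmetrisation ansatz failing). I would dispose of these by a finite case split on which smallness constraint is active, each branch closed by the same eliminate-then-check procedure, so that the margins $0.3844$, $0.9564$, $0.9644$ hold in every case.
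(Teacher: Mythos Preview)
Your outline is plausible in spirit but is only a plan, not a proof: the step you yourself flag as ``the genuine difficulty''---ruling out all other patterns of active constraints before imposing the symmetry ansatz $\beta_1=\beta_3$ and $\|\geo{v}_0-\geo{v}_4\|=1$---is deferred to an unspecified ``finite case split\ldots each branch closed by the same eliminate-then-check procedure''. Nothing in what you wrote guarantees that this closes with the stated numerical margins; the argument would ultimately have to be backed by an actual interval computation or explicit polynomial elimination, neither of which you carry out.

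The paper sidesteps all of this with one classical inequality you are missing: Ptolemy. With $a:=\|\geo{v}_0-\geo{v}_3\|$, $b:=\|\geo{v}_1-\geo{v}_4\|$, $d:=\|\geo{v}_0-\geo{v}_4\|$, the law of cosines in triangles $\geo{v}_0\geo{v}_3\geo{v}_4$ and $\geo{v}_4\geo{v}_1\geo{v}_0$ gives $a^2=d^2+c^2-2c\sqrt{d^2-h_4^{\,2}}$ and $b^2=d^2+c^2-2c\sqrt{d^2-h_1^{\,2}}$, whence $\min\{a,b\}^2\ge d^2+c^2-2c\sqrt{d^2-{w_8^*}^2}$. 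Ptolemy's inequality on the convex quadrilateral $\geo{v}_0\geo{v}_1\geo{v}_3\geo{v}_4$ yields $ab\le c^2+\|\geo{v}_1-\geo{v}_3\|\,d\le c^2+2cd$. The single resulting inequality $d^2+c^2-2c\sqrt{d^2-{w_8^*}^2}\le c^2+2cd$, together with $d\le 1$, $c<\ub{c}$, $w_8^*\ge\lb{w}$, gives all three bounds by elementary monotonicity---no parametrisation by turning angles, no case analysis, no symmetry hypothesis. (A minor slip in your write-up: under your normalisation $h_1\le h_4$ the formulas above give $b\le a$, so the larger of the two diagonals is $\|\geo{v}_0-\geo{v}_3\|$, not $\|\geo{v}_1-\geo{v}_4\|$.)
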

\begin{proof}
	Without any loss of generality, assume that, for $i=1$, $k_1 = 4$ and $k_4 = 0$. Let $a := \|\geo{v}_0-\geo{v}_3\|$, $b := \|\geo{v}_1-\geo{v}_4\|$, and $d := \|\geo{v}_0-\geo{v}_4\|$. From the triangles $\geo{v}_0 \geo{v}_3 \geo{v}_4$ and $\geo{v}_4 \geo{v}_0 \geo{v}_1$, we have
	\[
	\begin{aligned}
		a^2 &= d^2 + c^2 - 2c\sqrt{d^2-h_4^2},\\
		b^2 &= d^2 + c^2 - 2c\sqrt{d^2-h_1^2},
	\end{aligned}
	\]
	respectively. Therefore
	\[
	\min \{a,b\} \ge \sqrt{d^2 + c^2 - 2c\sqrt{d^2-{w_8^*}^2}}.
	\]
	Applying Ptolemy's inequality to the quadrilateral $\geo{v}_0\geo{v}_1\geo{v}_3\geo{v}_4$ yields
	\[
	ab \le c^2 + \|\geo{v}_1-\geo{v}_3\|d.
	\]
	Since $\|\geo{v}_1-\geo{v}_3\| \le 2c$, it follows that
	\begin{itemize}
		\item $d^2 + c^2 - 2c\sqrt{d^2-{w_8^*}^2} \le c^2 + 2cd \Rightarrow c \ge \frac{d^2}{2d + 2\sqrt{d^2-{w_8^*}^2}} \ge \frac{1}{2 + 2\sqrt{1-\lb{w}^2}} > 0.3844$,
		\item $d^2 + c^2 - 2c\sqrt{d^2-{w_8^*}^2} \le c^2 + 2cd \Rightarrow w_8^* \le \frac{d\sqrt{4cd-d^2}}{2c} \le \frac{\sqrt{4\ub{c}-1}}{2\ub{c}} < 0.9564$,
		\item $\max \{a,b\} \le \frac{c^2 + 2cd}{\sqrt{d^2 + c^2 - 2c\sqrt{d^2-{w_8^*}^2}}} \le \frac{\ub{c}^2 + 2\ub{c}}{\sqrt{1 + \ub{c}^2 - 2\ub{c}\sqrt{1-\lb{w}^2}}} < 0.9644$.
	\end{itemize}
\end{proof}

\subsection{Numerical results}
Now, mathematical optimization tools are used to show that the octagon $\geo{F}_8$ is optimal. For $n = 8$, the equilateral case of the maximal width problem can be formulated as follows:
\begin{subequations}\label{eq:8gon:width}
	\begin{align}
		\max_{\rv{x},\rv{y},c,\rv{h},w} \quad & w\\
		\subj \quad & (x_j - x_i)^2 + (y_j - y_i)^2 \le 1 &\forall 1\le i < j \le 7,\label{eq:ngon:d}\\
		& x_i^2 + y_i^2 \le 1 &\forall 1 \le i \le 7,\label{eq:ngon:r}\\
		& y_i \ge 0 &\forall 1 \le i \le 7,\label{eq:ngon:y}\\
		& x_iy_{i+1} - y_ix_{i+1} \ge 0 &\forall 1 \le i \le 6,\label{eq:ngon:order}\\
		& c^2 =  (x_i - x_{i-1})^2 + (y_i - y_{i-1})^2 &\forall 1 \le i \le 8,\label{eq:ngon:vi}\\
		& w \le h_i &\forall 1 \le i \le 8,\label{eq:ngon:hi}\\
		& c h_i = (x_{i-1}-x_{k_i})(y_i-y_{k_i}) - (y_{i-1}-y_{k_i})(x_i-x_{k_i})	&\forall 1 \le i \le 8,\\
		& x_4 = 0,\\
		& 0.384462 \le c \le 0.386952,\\
		& w \ge 0.953776.
	\end{align}
\end{subequations}

Problem~\eqref{eq:8gon:width} was solved on the NEOS Server~6.0 using AMPL with Couenne~0.5.8. AMPL codes have been made available at \url{https://github.com/cbingane/optigon}. The solver Couenne~\cite{belotti2009} is a branch-and-bound algorithm that aims at finding global optima of nonconvex mixed-integer nonlinear optimization problems. The results support the following keypoints:
\begin{itemize}
	\item Under configuration $\rv{k}^a = (5,6,7,0,0,1,2,3)$, Couenne certified in 0.32 s that Problem~\eqref{eq:8gon:width} is infeasible, i.e., there is no equilateral small octagon under configuration $\rv{k}^a$ having a width greater than $\lb{w}$.
	\item Under configuration $\rv{k}^b = (5,5,6,0,0,2,3,3)$, Couenne solved Problem~\eqref{eq:8gon:width} in 3.18 s. The optimal value obtained is
	\[
	w^* = 0.95377630014\ldots
	\]
	and the vertices of the optimal octagon are
	\[
	\begin{aligned}
		x_0^* &= 0,&&&y_0^* &= 0,\\
		x_1^* &= 0.320810 &= -x_7^*, &&y_1^* &= 0.214000 &= y_7^*,\\
		x_2^* &= 0.500000 &= -x_6^*, &&y_2^* &= 0.555477 &= y_6^*,\\
		x_3^* &= 0.384110 &= -x_5^*, &&y_3^* &= 0.923288 &= y_5^*,\\
		x_4^* &= 0, &&&y_4^* &= 0.957567.
	\end{aligned}
	\]
	Within the limit of the numerical computations, this corresponds to the octagon~$\geo{F}_8$.
\end{itemize}

\begin{proposition}
	The width of an equilateral small octagon does not exceed the value $\lb{w} + 10^{-6}$.
\end{proposition}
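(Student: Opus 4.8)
The plan is to combine the structural restrictions already obtained with the global solution of the finite–dimensional model~\eqref{eq:8gon:width}. First I would recall that, by Corollary~\ref{thm:height:optimal} together with the discussion preceding Figure~\ref{figure:8gon:height:optimal}, an equilateral small octagon $\geo{P}_8^*$ of maximal width has height graph exactly equal to one of the two canonical configurations $\rv{k}^a=(5,6,7,0,0,1,2,3)$ or $\rv{k}^b=(5,5,6,0,0,2,3,3)$ (the normalization of Section~\ref{sec:F8} places $\geo v_0$ at the origin and fixes the orientation, so no further relabelling is needed). Next I would verify that $\geo P_8^*$ is a feasible point of Problem~\eqref{eq:8gon:width} for the corresponding value of $\rv k$: constraints~\eqref{eq:ngon:d}--\eqref{eq:ngon:hi} are precisely the definition of a small equilateral octagon in normalized position, together with the definition of the heights $h_i$ and of the width $w=\min_i h_i$; the equation $x_4=0$ and the bound $w\ge 0.953776$ express $w_8^*\ge\lb{w}=W(\geo F_8)$ from Proposition~\ref{thm:F8:bound}; and the box $0.384462\le c\le 0.386952$ on the common side length comes from the upper bound $c<\ub c=L(\geo H_8)/8<0.386952$ and from the lower bound $c>0.384462$, which is the quantitative content of Proposition~\ref{thm:ki:tight} (applicable because in each of $\rv k^a$ and $\rv k^b$ there is an index $i$ with $k_i=i+3$ and $k_{i+3}=i-1$, as noted just before that proposition). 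Hence $w_8^*$ is at most the optimal value of~\eqref{eq:8gon:width} maximized over the two configurations.

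The second step is the global resolution. Running a complete spatial branch‑and‑bound solver (Couenne) on~\eqref{eq:8gon:width} with $\rv k=\rv k^a$ yields a certificate of infeasibility: no equilateral small octagon with that height graph has width at least $\lb w$. With $\rv k=\rv k^b$ the solver returns the global optimum $w^*=0.95377630014\ldots$, attained at the coordinates listed above, which are those of $\geo F_8$ to numerical accuracy. Therefore $w_8^*\le w^*+\epsilon$, where $\epsilon$ bounds the solver's feasibility and optimality tolerances; since $w^*$ agrees with $\lb w=0.9537763006\ldots$ to within less than $10^{-9}$, the slack $10^{-6}$ in the statement comfortably absorbs both the tolerance and this discrepancy, giving $w_8^*\le\lb w+10^{-6}$.

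I expect the only genuinely non‑elementary ingredient — and hence the main obstacle — to be the reliance on the numerical global optimizer: one must be confident both that the reduction to the two height‑graph configurations is exhaustive (guaranteed by Lemmas~\ref{lemma:ki}--\ref{lemma:ki:4gon}, Proposition~\ref{thm:ki:height}, and Corollary~\ref{thm:height:optimal}) and that Couenne's search tree is explored to completion under a controlled tolerance. A fully rigorous version would re‑certify the two subproblems with interval arithmetic or check the dual bound on each leaf of the tree by hand; the conservative $10^{-6}$ margin is exactly what makes the coarser certificate sufficient. Everything else is a routine check that the constraints of~\eqref{eq:8gon:width} faithfully encode ``equilateral small octagon in normalized position with prescribed furthest‑vertex pattern,'' after which the claimed bound follows by combining this step with the feasibility observation of the first paragraph.
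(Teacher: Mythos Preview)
Your approach is correct in spirit and uses the same ingredients as the paper---the reduction to the two height-graph configurations $\rv k^a,\rv k^b$ plus a global branch-and-bound run with Couenne---but the paper's actual proof of this proposition is shorter and more direct. Rather than relying on the reported optimal value $w^*=0.95377630014\ldots$ together with an unspecified solver tolerance~$\epsilon$, the paper simply tightens the lower bound in Problem~\eqref{eq:8gon:width} from $w\ge 0.953776$ to $w\ge 0.953777$ and has Couenne certify \emph{infeasibility} of the resulting system. An infeasibility certificate is a cleaner output than ``optimal value plus tolerance'': it directly yields $w_8^*<0.953777<\lb w+10^{-6}$ without any appeal to Couenne's internal optimality-gap parameters. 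Your version works provided Couenne's default tolerance is known to be well below $10^{-6}$, but the paper's formulation sidesteps that question entirely by turning the tolerance into part of the problem data.

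Two minor corrections to your write-up: the constraint $x_4=0$ is a rotational normalization (it fixes the remaining degree of freedom after pinning $\geo v_0$ at the origin and orienting the polygon in the upper half-plane), not an expression of the lower bound $w_8^*\ge\lb w$; and when Couenne returns a global optimum the rigorous statement is that the certified dual bound, not $w^*$ itself, majorizes $w_8^*$---so the inequality you want is $w_8^*\le(\text{dual bound})$, which only equals $w^*+\epsilon$ if you know how the solver terminated.
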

\begin{proof}
	By replacing the lower bound $w\ge 0.953776$ on the variable $w$ by $w\ge 0.953777$ in Problem~\eqref{eq:8gon:width}, Couenne certified in 2.05 s that there is no equilateral small octagon whose width exceeds the value $0.953777$.
\end{proof}

Let $\rv{z}^*$ be an optimal solution of a maximization problem
\[
\max_{\rv{z} \in \Omega} g(\rv{z}),
\]
where $\Omega \subset \R^n$ and $g\colon \R^n \to \R$. For $\varepsilon^* \ge 0$, $\rv{z}^* \in \Omega$ is said {\em $\varepsilon^*$-unique} if
\[
g(\rv{z}^*) > \max_{\rv{z} \in \Omega, \|\rv{z} - \rv{z}^*\| \ge \varepsilon} g(\rv{z})
\]
for any $\varepsilon \ge \varepsilon^*$. This definition was introduced in~\cite{audet2004} to show that $\geo{H}_8$ is the near-unique convex equilateral small octagon of maximal perimeter.

\begin{proposition}
	The optimal octagon $\geo{F}_8$ is $\varepsilon^*$-unique, with $\varepsilon^* = 1.5 \times 10^{-4}$, using the Euclidian norm.
\end{proposition}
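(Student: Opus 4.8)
The plan is to certify $\varepsilon^*$-uniqueness by one further global-optimization run, in the spirit of the two preceding propositions, preceded by a short reduction. Write $\rv{z}=(\rv{x},\rv{y},c,\rv{h},w)$ for the decision vector of Problem~\eqref{eq:8gon:width} and let $\rv{z}^*$ be the solution reported above, which coincides with $\geo{F}_8$ up to the numerical tolerance and satisfies $g(\rv{z}^*)=w^*=W(\geo{F}_8)$. Recall that, by Corollary~\ref{thm:height:optimal}, an optimal height graph is either $\rv{k}^a$ or $\rv{k}^b$, and that under $\rv{k}^a$ Problem~\eqref{eq:8gon:width} is infeasible; hence every near-maximizer has configuration $\rv{k}^b$, and it suffices to prove the statement relative to $\Omega$, the feasible set of Problem~\eqref{eq:8gon:width} under configuration $\rv{k}^b$. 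Moreover the map $\varepsilon\mapsto\max\{g(\rv{z}):\rv{z}\in\Omega,\ \|\rv{z}-\rv{z}^*\|\ge\varepsilon\}$ is non-increasing, so it is enough to establish the single inequality
\[
\max\bigl\{\,w\ :\ \rv{z}\in\Omega,\ \|\rv{z}-\rv{z}^*\|^2\ge(\varepsilon^*)^2\,\bigr\}<w^*,\qquad \varepsilon^*=1.5\times10^{-4},
\]
and the general case $\varepsilon\ge\varepsilon^*$ then follows automatically.

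To obtain this inequality, append to Problem~\eqref{eq:8gon:width} (under $\rv{k}^b$) the single nonconvex quadratic constraint
\[
\sum_{i=0}^{7}\bigl[(x_i-x_i^*)^2+(y_i-y_i^*)^2\bigr]+(c-c^*)^2+\sum_{i=1}^{8}(h_i-h_i^*)^2+(w-w^*)^2\ \ge\ (\varepsilon^*)^2,
\]
which excises a ball of radius $\varepsilon^*$ around $\rv{z}^*$, and submit the augmented model to Couenne on the NEOS Server exactly as before. One expects Couenne to certify that this model is infeasible --- so that $\Omega$ contains no point at distance $\ge\varepsilon^*$ from $\rv{z}^*$ and the displayed maximum is taken over the empty set --- or else to return an optimal value strictly below $w^*=0.9537763006\ldots$; in either case the required strict inequality holds, hence $\rv{z}^*$, that is $\geo{F}_8$, is $\varepsilon^*$-unique. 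The AMPL model used would be deposited in the repository cited above, and the run is expected to finish within a few seconds.

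The substantive difficulty is computational rather than analytic. The radius $\varepsilon^*$ has to be large enough to absorb every configuration genuinely competitive with $\geo{F}_8$ --- comfortably above, in particular, the $O(10^{-6})$ discrepancy between the numerical $\rv{z}^*$ and the exact vertex data of $\geo{F}_8$ listed in Proposition~\ref{thm:F8:bound} --- while the reverse-ball inequality destroys convexity of Couenne's relaxation, so that the entire burden of closing the bound over this perforated feasible region rests on branch-and-bound. The value $\varepsilon^*=1.5\times10^{-4}$ is reported as essentially the smallest threshold for which the run returns the desired certificate; a strictly smaller radius would leave competing near-optimal shapes inside the excised ball and the certificate would fail.
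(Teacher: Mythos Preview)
Your approach is essentially that of the paper: append a quadratic ball-exclusion constraint to Problem~\eqref{eq:8gon:width} under configuration $\rv{k}^b$ and let Couenne certify infeasibility against the existing lower bound $w\ge 0.953776$. The only notable difference is that the paper measures distance in the vertex coordinates $(x_i,y_i)$ alone---adding $\sum_{i=1}^{7}(x_i-x_i^*)^2+(y_i-y_i^*)^2\ge 2.25\times10^{-8}$ rather than a norm on the full decision vector with the auxiliary variables $c,\rv{h},w$---and then reports actual infeasibility (in $2.80$~s) rather than the expected outcome.
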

\begin{proof}
	Let $(x_i^*,y_i^*)$, $i = 0,1,\ldots,7$, be the vertices coordinates of $\geo{F}_8$. Solving with Couenne showed in 2.80 s that adding the constraint
	\[
	\sum_{i=1}^{7} (x_i - x_i^*)^2 + (y_i - y_i^*)^2 \ge 2.25 \times 10^{-8}
	\]
	to Problem~\eqref{eq:8gon:width} makes it infeasible due to the lower bound on the variable $w$.
\end{proof}

\section{Bounds on the maximal width of equilateral small $2^s$-gons with integer $s \ge 4$}\label{sec:Gn}
This section provides a family of equilateral small $n$-gons, with $n=2^s$ and $s\ge 4$, whose widths are within $O(1/n^4)$ of the optimal value.

For any $n=2^s$ with $s\ge 4$, consider the $n$-gon $\geo{G}_n$, illustrated in Figure~\ref{figure:Gn} for $n=16$ and $n=32$. Its height graph has the edge $\geo{v}_0-\geo{v}_{\frac{n}{2}}$ as axis of symmetry and can be described by  the $(3n/4-1)$-length cycle $\geo{v}_0 - \geo{v}_{\frac{n}{2}-1} - \ldots - \geo{v}_{\frac{3n}{4}+1} - \geo{v}_{\frac{n}{4}} - \geo{v}_{\frac{3n}{4}} - \geo{v}_{\frac{n}{4}-1} - \ldots - \geo{v}_{\frac{n}{2}+1} - \geo{v}_0$ and the pendant edges $\geo{v}_0 - \geo{v}_{\frac{n}{2}}$, $\geo{v}_{4k-1} - \geo{v}_{4k-1+\frac{n}{2}}$, $k=1,\ldots,n/8$. The edges in dotted lines have all the same length~$d$.

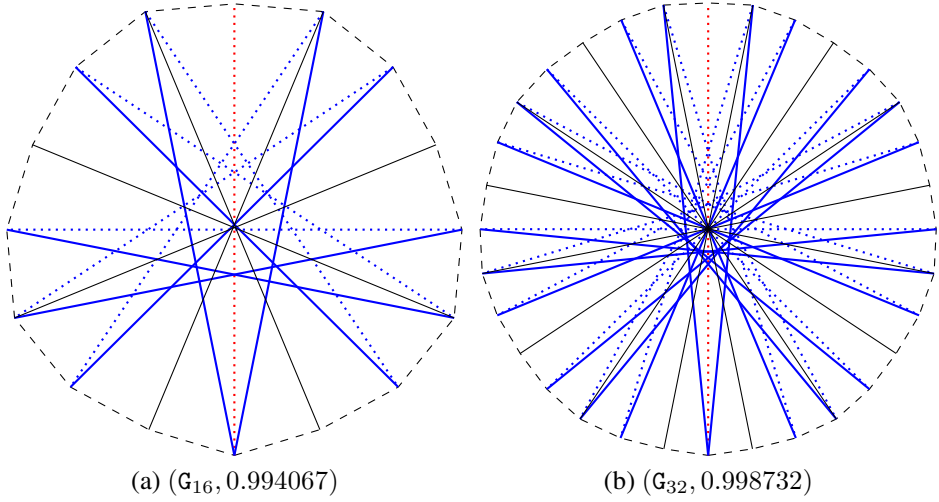
\begin{figure}[h]
	\centering
	\subfloat[$(\geo{G}_{16},0.994067)$]{
		\begin{tikzpicture}[scale=6]
			\draw[dashed] (0,0) -- (0.1875,0.0569) -- (0.3592,0.1510) -- (0.4818,0.3038) -- (0.4989,0.4989) -- (0.4421,0.6864) -- (0.3479,0.8582) -- (0.1951,0.9808) -- (0,0.9979) -- (-0.1951,0.9808) -- (-0.3479,0.8582) -- (-0.4421,0.6864) -- (-0.4989,0.4989) -- (-0.4818,0.3038) -- (-0.3592,0.1510) -- (-0.1875,0.0569) -- cycle;
			\draw[dotted,red,thick] (0,0)--(0,0.9979);
			\draw[blue,thick] (0,0) -- (0.1951,0.9808); \draw[blue,thick] (0,0) -- (-0.1951,0.9808);
			\draw[dotted,blue,thick] (0.1951,0.9808) -- (-0.3592,0.1510); \draw[dotted,blue,thick] (-0.1951,0.9808) -- (0.3592,0.1510);
			\draw[blue,thick] (-0.3592,0.1510) -- (0.3479,0.8582); \draw[blue,thick] (0.3592,0.1510) -- (-0.3479,0.8582);
			\draw[dotted,blue,thick] (0.3479,0.8582) -- (-0.4818,0.3038); \draw[dotted,blue,thick] (-0.3479,0.8582) -- (0.4818,0.3038);
			\draw[blue,thick] (-0.4818,0.3038) -- (0.4989,0.4989); \draw[blue,thick] (0.4818,0.3038) -- (-0.4989,0.4989);
			\draw[dotted,blue,thick] (0.4989,0.4989) -- (-0.4989,0.4989);
			\draw (0.1951,0.9808) -- (-0.1875,0.0569);\draw (-0.1951,0.9808) -- (0.1875,0.0569);
			\draw (-0.4818,0.3038) -- (0.4421,0.6864);\draw (0.4818,0.3038) -- (-0.4421,0.6864);
		\end{tikzpicture}
	}
	\subfloat[$(\geo{G}_{32},0.998732)$]{
		\begin{tikzpicture}[scale=6]
			\draw[dashed] (0,0) -- (0.0971,0.0144) -- (0.1922,0.0384) -- (0.2809,0.0804) -- (0.3535,0.1464) -- (0.4120,0.2253) -- (0.4623,0.3095) -- (0.4952,0.4019) -- (0.4999,0.4999) -- (0.4855,0.5970) -- (0.4616,0.6922) -- (0.4195,0.7808) -- (0.3535,0.8534) -- (0.2747,0.9119) -- (0.1905,0.9622) -- (0.0980,0.9952) -- (0,0.9999) -- (-0.0980,0.9952) -- (-0.1905,0.9622) -- (-0.2747,0.9119) -- (-0.3535,0.8534) -- (-0.4195,0.7808) -- (-0.4616,0.6922) -- (-0.4855,0.5970) -- (-0.4999,0.4999) -- (-0.4952,0.4019) -- (-0.4623,0.3095) -- (-0.4120,0.2253) -- (-0.3535,0.1464) -- (-0.2809,0.0804) -- (-0.1922,0.0384) -- (-0.0971,0.0144) -- cycle;
			\draw[dotted,red,thick] (0,0)--(0,0.9999);
			\draw[blue,thick] (0,0) -- (0.0980,0.9952); \draw[blue,thick] (0,0) -- (-0.0980,0.9952);
			\draw[dotted,blue,thick] (0.0980,0.9952) -- (-0.1922,0.0384); \draw[dotted,blue,thick] (-0.0980,0.9952) -- (0.1922,0.0384);
			\draw[blue,thick] (-0.1922,0.0384) -- (0.1905,0.9622); \draw[blue,thick] (0.1922,0.0384) -- (-0.1905,0.9622);
			\draw[dotted,blue,thick] (0.1905,0.9622) -- (-0.2809,0.0804); \draw[dotted,blue,thick] (-0.1905,0.9622) -- (0.2809,0.0804);
			\draw[blue,thick] (-0.2809,0.0804) -- (0.3535,0.8534); \draw[blue,thick] (0.2809,0.0804) -- (-0.3535,0.8534);
			\draw[dotted,blue,thick] (0.3535,0.8534) -- (-0.3535,0.1464); \draw[dotted,blue,thick] (-0.3535,0.8534) -- (0.3535,0.1464);
			\draw[blue,thick] (-0.3535,0.1464) -- (0.4195,0.7808); \draw[blue,thick] (0.3535,0.1464) -- (-0.4195,0.7808);
			\draw[dotted,blue,thick] (0.4195,0.7808) -- (-0.4623,0.3095); \draw[dotted,blue,thick] (-0.4195,0.7808) -- (0.4623,0.3095);
			\draw[blue,thick] (-0.4623,0.3095) -- (0.4616,0.6922); \draw[blue,thick] (0.4623,0.3095) -- (-0.4616,0.6922);
			\draw[dotted,blue,thick] (0.4616,0.6922) -- (-0.4952,0.4019); \draw[dotted,blue,thick] (-0.4616,0.6922) -- (0.4952,0.4019);
			\draw[blue,thick] (-0.4952,0.4019) -- (0.4999,0.4999); \draw[blue,thick] (0.4952,0.4019) -- (-0.4999,0.4999);
			\draw[dotted,blue,thick] (0.4999,0.4999) -- (-0.4999,0.4999);
			\draw (0.0980,0.9952) -- (-0.0971,0.0144);\draw (-0.0980,0.9952) -- (0.0971,0.0144);
			\draw (-0.2809,0.0804) -- (0.2747,0.9119);\draw (0.2809,0.0804) -- (-0.2747,0.9119);
			\draw (0.4195,0.7808) -- (-0.4120,0.2253);\draw (-0.4195,0.7808) -- (0.4120,0.2253);
			\draw (-0.4952,0.4019) -- (0.4855,0.5970);\draw (0.4952,0.4019) -- (-0.4855,0.5970);
		\end{tikzpicture}
	}
	\caption{Polygons $(\geo{G}_n,W(\geo{G}_n))$ defined in Theorem~\ref{thm:Gn}: (a) Hexadecagon $\geo{G}_{16}$; (b) Triacontadigon $\geo{G}_{32}$}
	\label{figure:Gn}
\end{figure}

Place the vertex $\geo{v}_{\frac{n}{2}}$ at $(0,d)$ in the plane. Let $\alpha := \angle \geo{v}_{\frac{n}{2}} \geo{v}_0 \geo{v}_{\frac{n}{2}-1}$ and $\beta := \angle \geo{v}_0 \geo{v}_{\frac{n}{2}-1} \geo{v}_{n-1}$. Since $\geo{G}_n$ is equilateral, we have $\|\geo{v}_{\frac{n}{2}}-\geo{v}_{\frac{n}{2}-1}\| = \|\geo{v}_0-\geo{v}_{n-1}\|$, which implies
\begin{equation}\label{eq:condition:dab}
	1 + d^2 - 2d\cos \alpha = 2 - 2 \cos \beta.
\end{equation}
Since $\geo{G}_n$ is symmetric, we have from the cycle $\geo{v}_0 - \ldots - \geo{v}_{\frac{n}{4}} - \geo{v}_{\frac{3n}{4}} - \ldots - \geo{v}_{0}$,
\begin{equation}\label{eq:condition:ab}
\frac{n}{8}(3\alpha + \beta) = \frac{\pi}{2}.
\end{equation}
Since the edge $\geo{v}_{\frac{n}{4}} - \geo{v}_{\frac{3n}{4}}$ is horizontal and $\|\geo{v}_{\frac{n}{4}} - \geo{v}_{\frac{3n}{4}}\| = d$, we also have $x_{\frac{n}{4}} = \frac{d}{2} = - x_{\frac{3n}{4}}$, which yields after simplications,
\begin{equation}\label{eq:xn4}
	d = \frac{2\cos(2\alpha + \beta)+1}{2\cos \alpha + \cos(3\alpha + \beta)}.
\end{equation}
An asymptotic analysis produces that, for large $n$, the system of equations~\eqref{eq:condition:dab},~\eqref{eq:condition:ab}, and~\eqref{eq:xn4} has a solution $(\alpha_0(n),\beta_0(n),d_0(n))$ satisfying
\[
\begin{aligned}
	\alpha_0(n) &=  \frac{\pi}{n} + \frac{\pi^5}{6n^5} + \frac{\pi^7}{12n^7} + O\left(\frac{1}{n^9}\right),\\
	\beta_0(n) &=  \frac{\pi}{n} - \frac{\pi^5}{2n^5} - \frac{\pi^7}{4n^7} + O\left(\frac{1}{n^9}\right),\\
	d_0(n) &=  1 - \frac{4\pi^4}{3n^4} - \frac{7\pi^6}{3n^6} + O\left(\frac{1}{n^8}\right).
\end{aligned}
\]

By setting $(\alpha,\beta,d) = (\alpha_0(n),\beta_0(n),d_0(n))$, the width of $\geo{G}_n$ is
\[
	W(\geo{G}_n) = \frac{d_0(n)\sin \alpha_0(n)}{2\sin (\beta_0(n)/2)} = 1 - \frac{\pi^2}{8n^2} - \frac{85\pi^4}{128n^4} - \frac{92801\pi^6}{46080n^6} + O\left(\frac{1}{n^8}\right)
\]
and
\[
\begin{aligned}
	\ub{W}_n - W(\geo{G}_n) &= \frac{2\pi^4}{3n^4} + \frac{145\pi^6}{72n^6} + O\left(\frac{1}{n^8}\right),\\
	W(\geo{G}_n) - W(\geo{R}_n) &= \frac{3\pi^2}{8n^2} - \frac{271\pi^4}{384n^4} + O\left(\frac{1}{n^6}\right).
\end{aligned}
\]
By construction, $\geo{G}_n$ is small. This completes the proof of Theorem~\ref{thm:Gn}.\qed

All polygons presented in this work and in~\cite{bingane2021b,bingane2021c,bingane2021d,bingane2021f} were implemented as a MATLAB package: OPTIGON, which is freely available on GitHub~\cite{optigon}. One can also find an algorithm developed in~\cite{bingane2021a} to find an estimate of the maximal area of a small $n$-gon when $n \ge 6$ is even.

Table~\ref{table:width} gives the widths of $\geo{G}_n$, along with the upper bounds $\ub{W}_n$ and the widths of $\geo{R}_n$. We also report the widths of $\geo{F}_n$, illustrated in Figure~\ref{figure:Fn}. Polygons $\geo{F}_n$ can be seen as a generalization of the optimal $8$-gon $\geo{F}_8$ and we can show that
\[
\ub{W}_n - W(\geo{F}_n) = \frac{\pi^3}{2n^3} - \frac{5\pi^5}{8n^5} + O\left(\frac{1}{n^6}\right)
\]
for all $n=2^s$ and $s\ge 3$. The height graph of~$\geo{F}_n$ in Figure~\ref{figure:Fn} has the vertical edge as axis of symmetry and can be described by a cycle of length $n/2+1$, plus $n/2-1$ additional pendant edges, arranged so that all but two particular vertices of the cycle have a pendant edge. In Table~\ref{table:width}, $\geo{G}_n$ provides a tighter lower bound on the maximal width~$w_n^*$ compared to~$\geo{R}_n$ or $\geo{F}_n$. The fraction $\frac{W(\geo{G}_n) - W(\geo{R}_n)}{\ub{W}_n - W(\geo{R}_n)}$ of the length of the interval $[W(\geo{R}_n), \ub{W}_n]$ containing $W(\geo{G}_n)$ shows that $W(\geo{G}_n)$ approaches $\ub{W}_n$ much faster than $W(\geo{R}_n)$ does as $n$ increases. Indeed, $W(\geo{G}_n) - W(\geo{R}_n) \sim 3\pi^2/(8n^2)$ for large~$n$.

\begin{table}[t]
	\footnotesize
	\centering
	\caption{Widths of $\geo{G}_n$}
	\label{table:width}
	\begin{tabular}{@{}rllllr@{}}
		\toprule
		$n$ & $W(\geo{R}_n)$ & $W(\geo{F}_n)$ & $W(\geo{G}_n)$ & $\ub{W}_n$ & $ \frac{W(\geo{G}_n) - W(\geo{R}_n)}{\ub{W}_n - W(\geo{R}_n)}$ \\
		\midrule
		16	&	0.9807852804	&	0.9915310059	&	0.9940673080	&	0.9951847267	&	0.9224	\\
		32	&	0.9951847267	&	0.9983271244	&	0.9987316811	&	0.9987954562	&	0.9823	\\
		64	&	0.9987954562	&	0.9996398418	&	0.9996949197	&	0.9996988187	&	0.9957	\\
		128	&	0.9996988187	&	0.9999173147	&	0.9999244595	&	0.9999247018	&	0.9989	\\
		256	&	0.9999247018	&	0.9999802514	&	0.9999811602	&	0.9999811753	&	0.9997	\\
		\bottomrule
	\end{tabular}
\end{table}

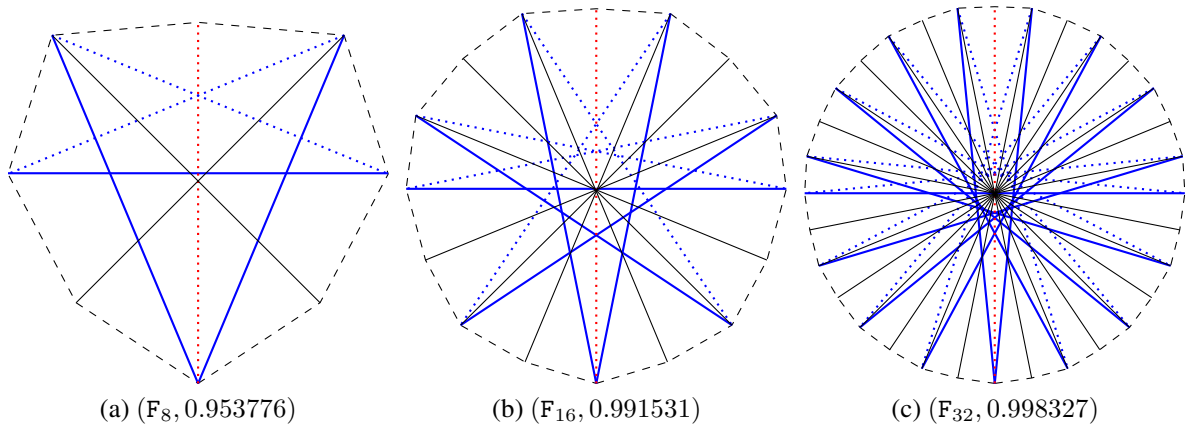
\begin{figure}[h]
	\centering
	\subfloat[$(\geo{F}_8,0.953776)$]{
		\begin{tikzpicture}[scale=5]
			\draw[dashed] (0,0) -- (0.3208,0.2140) -- (0.5000,0.5555) -- (0.3841,0.9233) -- (0,0.9576) -- (-0.3841,0.9233) -- (-0.5000,0.5555) -- (-0.3208,0.2140) -- cycle;
			\draw[blue,thick] (0,0) -- (0.3841,0.9233); \draw[blue,thick] (0,0) -- (-0.3841,0.9233);
			\draw[dotted,thick,blue] (0.3841,0.9233) -- (-0.5000,0.5555); \draw[dotted,thick,blue] (-0.3841,0.9233) -- (0.5000,0.5555);
			\draw[blue,thick] (0.5000,0.5574) -- (-0.5000,0.5574);
			\draw[dotted,red,thick] (0,0) -- (0,0.9576);
			\draw (0.3841,0.9233) -- (-0.3208,0.2140);\draw (-0.3841,0.9233) -- (0.3208,0.2140);
		\end{tikzpicture}
	}
	\subfloat[$(\geo{F}_{16},0.991531)$]{
		\begin{tikzpicture}[scale=5]
			\draw[dashed] (0,0) -- (0.1873,0.0568) -- (0.3569,0.1545) -- (0.4491,0.3271) -- (0.5000,0.5161) -- (0.4746,0.7102) -- (0.3501,0.8618) -- (0.1953,0.9807) -- (0,0.9937) -- (-0.1953,0.9807) -- (-0.3501,0.8618) -- (-0.4746,0.7102) -- (-0.5000,0.5161) -- (-0.4491,0.3271) -- (-0.3569,0.1545) -- (-0.1873,0.0568) -- cycle;
			\draw[blue,thick] (0,0) -- (0.1953,0.9807); \draw[blue,thick] (0,0) -- (-0.1953,0.9807);
			\draw[dotted,blue,thick] (0.1953,0.9807) -- (-0.3569,0.1545); \draw[dotted,blue,thick] (-0.1953,0.9807) -- (0.3569,0.1545);
			\draw[blue,thick] (-0.3569,0.1545) -- (0.4746,0.7102); \draw[blue,thick] (0.3569,0.1545) -- (-0.4746,0.7102);
			\draw[dotted,blue,thick] (0.4746,0.7102) -- (-0.5000,0.5161); \draw[dotted,blue,thick] (-0.4746,0.7102) -- (0.5000,0.5161);
			\draw[blue,thick] (0.5000,0.5161) -- (-0.5000,0.5161);
			\draw[dotted,red,thick] (0,0) -- (0,0.9937);
			\draw (0.1953,0.9807) -- (-0.1873,0.0568);\draw (-0.1953,0.9807) -- (0.1873,0.0568);
			\draw (-0.3569,0.1545) -- (0.3501,0.8618);\draw (0.3569,0.1545) -- (-0.3501,0.8618);
			\draw (0.4746,0.7102) -- (-0.4491,0.3271);\draw (-0.4746,0.7102) -- (0.4491,0.3271);
		\end{tikzpicture}
	}
	\subfloat[$(\geo{F}_{32},0.998327)$]{
		\begin{tikzpicture}[scale=5]
			\draw[dashed] (0,0) -- (0.0971,0.0144) -- (0.1920,0.0391) -- (0.2762,0.0895) -- (0.3545,0.1486) -- (0.4129,0.2274) -- (0.4626,0.3120) -- (0.4865,0.4072) -- (0.5000,0.5044) -- (0.4943,0.6023) -- (0.4613,0.6948) -- (0.4185,0.7830) -- (0.3526,0.8558) -- (0.2794,0.9210) -- (0.1907,0.9630) -- (0.0980,0.9952) -- (0,0.9991) -- (-0.0980,0.9952) -- (-0.1907,0.9630) -- (-0.2794,0.9210) -- (-0.3526,0.8558) -- (-0.4185,0.7830) -- (-0.4613,0.6948) -- (-0.4943,0.6023) -- (-0.5000,0.5044) -- (-0.4865,0.4072) -- (-0.4626,0.3120) -- (-0.4129,0.2274) -- (-0.3545,0.1486) -- (-0.2762,0.0895) -- (-0.1920,0.0391) -- (-0.0971,0.0144) -- cycle;
			\draw[blue,thick] (0,0) -- (0.0980,0.9952); \draw[blue,thick] (0,0) -- (-0.0980,0.9952);
			\draw[dotted,blue,thick] (0.0980,0.9952) -- (-0.1920,0.0391); \draw[dotted,blue,thick] (-0.0980,0.9952) -- (0.1920,0.0391);
			\draw[blue,thick] (-0.1920,0.0391) -- (0.2794,0.9210); \draw[blue,thick] (0.1920,0.0391) -- (-0.2794,0.9210);
			\draw[dotted,blue,thick] (0.2794,0.9210) -- (-0.3545,0.1486); \draw[dotted,blue,thick] (-0.2794,0.9210) -- (0.3545,0.1486);
			\draw[blue,thick] (-0.3545,0.1486) -- (0.4185,0.7830); \draw[blue,thick] (0.3545,0.1486) -- (-0.4185,0.7830);
			\draw[dotted,blue,thick] (0.4185,0.7830) -- (-0.4626,0.3120); \draw[dotted,blue,thick] (-0.4185,0.7830) -- (0.4626,0.3120);
			\draw[blue,thick] (-0.4626,0.3120) -- (0.4943,0.6023); \draw[blue,thick] (0.4626,0.3120) -- (-0.4943,0.6023);
			\draw[dotted,blue,thick] (0.4943,0.6023) -- (-0.5000,0.5044); \draw[dotted,blue,thick] (-0.4943,0.6023) -- (0.5000,0.5044);
			\draw[blue,thick] (0.5000,0.5044) -- (-0.5000,0.5044);
			\draw[dotted,red,thick] (0,0) -- (0,0.9991);
			\draw (0.0980,0.9952) -- (-0.0971,0.0144);\draw (-0.0980,0.9952) -- (0.0971,0.0144);
			\draw (-0.1920,0.0391) -- (0.1907,0.9630);\draw (0.1920,0.0391) -- (-0.1907,0.9630);
			\draw (0.2794,0.9210) -- (-0.2762,0.0895);\draw (-0.2794,0.9210) -- (0.2762,0.0895);
			\draw (-0.3545,0.1486) -- (0.3526,0.8558);\draw (0.3545,0.1486) -- (-0.3526,0.8558);
			\draw (0.4185,0.7830) -- (-0.4129,0.2274);\draw (-0.4185,0.7830) -- (0.4129,0.2274);
			\draw (-0.4626,0.3120) -- (0.4613,0.6948);\draw (0.4626,0.3120) -- (-0.4613,0.6948);
			\draw (0.4943,0.6023) -- (-0.4865,0.4072);\draw (-0.4943,0.6023) -- (0.4865,0.4072);
		\end{tikzpicture}
	}
	\caption{A generalization of the optimal octagon $(\geo{F}_n,W(\geo{F}_n))$: (a) Optimal octagon~$\geo{F}_{8}$; (b) Hexadecagon~$\geo{F}_{16}$; (c) Triacontadigon~$\geo{F}_{32}$}
	\label{figure:Fn}
\end{figure}

\section{Conclusion}\label{sec:conclusion}
We found with precision $10^{-6}$ the equilateral small octagon of maximal width and showed that there is no other optimal octagon outside a ball of radius $1.5 \times 10^{-4}$. In addition, for each $n=2^s$ where $s\ge 4$ is an integer, we constructed an equilateral small $n$-gon whose width is within $2\pi^4/(3n^4) + O(1/n^6)$ of the maximal width and exceeds that of the regular polygon.

\bibliographystyle{ieeetr}
\bibliography{../../research}
\end{document}